\DeclareMathAlphabet{\mathpzc}{OT1}{pzc}{m}{it}
\renewcommand{\subsection}[1]{\vspace{.18in}
\par\noindent\addtocounter{subsection}{1}
\setcounter{equation}{0}{\bf\thesubsection.\hspace{5pt}#1}}
\theoremstyle{definition}
\newtheorem{Def}[subsection]{Definition}
\newtheorem{Rem}[subsection]{Remark}
\newtheorem{Rems}[subsection]{Remarks}
\theoremstyle{plain}
\newtheorem{Prop}[subsection]{Proposition}
\newtheorem{Thm}[subsection]{Theorem}
\newtheorem{Lem}[subsection]{Lemma}
\newtheorem{Coro}[subsection]{Corollary}
\numberwithin{equation}{subsection}
\newtheorem{Conj}[subsection]{Conjecture}
\newcommand{\fD}{{\mathfrak D}}
\newcommand{\Vn}{\sV(n)}
\newcommand{\Wnz}{\sW_\mbz(n)}
\newcommand{\Wnk}{\sW_k(n)}
\newcommand{\afVn}{\sV_\vtg(n)}
\newcommand{\afbfVn}{\boldsymbol{\sV}_\vtg(n)}
\newcommand{\Vnz}{\sV^0(n)}
\newcommand{\bfVn}{\boldsymbol{\sV}(n)}
\newcommand{\bfVnz}{\boldsymbol{\sV}^0(n)}
\newcommand{\bse}{\boldsymbol{e}}
\newcommand{\bfj}{{\mathbf{j}}}
\newcommand{\bfl}{{\mathbf{0}}}
\newcommand{\bft}{{\mathbf{t}}}
\newcommand{\bfU}{{\mathbf{U}}}
\def\fS{{\frak S}}
\def\fB{{\frak B}}
\newcommand{\msD}{\mathscr D}
\def\sH{{\mathcal H}}
\def\sU{{\mathcal U}}
\def\sV{{\mathcal V}}
\def\sW{{\mathcal W}}
\def\sX{{\mathcal X}}
\def\sZ{{\mathcal Z}}
\newcommand{\mbzn}{\mathbb Z^{n}}
\newcommand{\mbnn}{\mathbb N^{n}}
\newcommand{\mbn}{\mathbb N}
\newcommand{\mbq}{\mathbb Q}
\newcommand{\mbz}{\mathbb Z}
\newcommand{\End}{\operatorname{End}}
\newcommand{\spann}{\operatorname{span}}
\newcommand{\diag}{\operatorname{diag}}
\def\ro{\text{\rm ro}}
\def\co{\text{\rm co}}
\newcommand{\la}{{\lambda}}
\newcommand{\La}{\Lambda}
\newcommand{\ga}{{\gamma}}
\newcommand{\Th}{\Theta}
\newcommand{\dt}{\delta}
\newcommand{\up}{v}
\newcommand{\vep}{\varepsilon}
\newcommand{\al}{\alpha}
\newcommand{\bt}{\beta}
\newcommand{\sg}{\sigma}
\newcommand{\ol}{\overline}
\newcommand{\lan}{\langle}
\newcommand{\ran}{\rangle}
\newcommand{\dleb}{\left[\!\!\left[}
\newcommand{\leb}{\left[}
\newcommand{\bbl}{\big[}
\newcommand{\bbr}{\big]}
\newcommand{\dbbl}{\big[\!\!\big[}
\newcommand{\dbbr}{\big]\!\!\big]}
\newcommand{\drib}{\right]\!\!\right]}
\newcommand{\rib}{\right]}
\def\ddet#1{|\!| #1 |\!|}
\def\ggp#1#2{\left[\kern-3.2pt\left[{#1\atop #2}\right]\kern-3.2pt\right]}
\newcommand{\p}{\prec}
\newcommand{\pr}{\preccurlyeq}
\def\leq{\leqslant}\def\geq{\geqslant}
\def\le{\leqslant}\def\ge{\geqslant}
\newcommand{\bop}{\bigoplus}
\newcommand{\ot}{\otimes}
\newcommand{\han}{\subseteq}
\newcommand{\h}{\widehat}
\newcommand{\ti}{\widetilde}
\newcommand{\Lanr}{\Lambda(n,r)}
\newcommand{\Thn}{\Th(n)}
\newcommand{\Thnpm}{\Th^\pm(n)}
\newcommand{\Thnp}{\Th^+(n)}
\newcommand{\Thnm}{\Th^-(n)}
\newcommand\Thnr{\Theta(n,r)}
\newcommand{\lra}{\longrightarrow}
\newcommand{\ra}{\rightarrow}
\newcommand{\map}{\mapsto}
\newcommand{\bfUn}{{\mathbf U}(n)}
\newcommand{\bfUnz}{{\mathbf U}^0(n)}
\newcommand{\Un}{U(n)}
\newcommand{\Unk}{U_k(n)}
\newcommand{\Unz}{U_\mbz(n)}
\newcommand{\barUnk}{\ol{U_k(n)}}
\newcommand{\vtg}{{\!\vartriangle\!}}
\newcommand{\fSr}{\fS_r}
\newcommand{\Hr}{{\sH(r)}}
\newcommand{\bfHr}{{\boldsymbol{\mathcal H}}(r)}
\newcommand{\afE}{E^\vartriangle}
\newcommand{\Sr}{{\mathcal S}(n,r)}
\newcommand{\Srk}{{\mathcal S}_k(n,r)}
\newcommand{\Srq}{{\mathcal S}_\mbq(n,r)}
\newcommand{\afSr}{{\mathcal S}_{\vtg}(n,r)}
\newcommand{\bfSr}{{\boldsymbol{\mathcal S}}(n,r)}
\newcommand{\afbfSr}{{\boldsymbol{\mathcal S}}_\vtg(n,r)}
\newcommand{\afmbnn}{\mathbb N_\vtg^{n}}
\newcommand{\afmbzn}{\mathbb Z_\vtg^{n}}
\newcommand{\afThn}{\Theta_\vtg(n)}
\newcommand{\afThnpm}{\Theta_\vtg^\pm(n)}
\newcommand{\afThnr}{\Theta_\vtg(n,r)}
\newcommand{\afLa}{\Lambda_\vtg}
\newcommand{\afLanr}{\Lambda_\vtg(n,r)}
\begin{document}
\title{BLM realization for the integral form of quantum $\frak{gl}_n$}

\author{Qiang Fu}
\address{Department of Mathematics, Tongji University, Shanghai, 200092, China.}
\email{q.fu@tongji.edu.cn}


\thanks{Supported by the National Natural Science Foundation
of China, the Program NCET, Fok Ying Tung Education Foundation
 and the Fundamental Research Funds for the Central Universities}

\begin{abstract}
Let $\bfUn$ be the quantum enveloping algebra of ${\frak {gl}}_n$ over $\mbq(v)$, where $v$ is an indeterminate. We will use $q$-Schur algebras to realize the integral form of $\bfUn$. Furthermore we will use this result to realize quantum $\frak{gl}_n$ over $k$, where $k$ is a field containing an $l$-th primitive root $\varepsilon$ of $1$ with $l\geq 1$ odd.
\end{abstract}

 \sloppy \maketitle

\section{Introduction}
It is well known that the positive part  of the integral form of quantum enveloping algebras of finite type was realized as a Ringel--Hall algebra (see \cite{R90,R932}).  Using a beautiful geometric construction of $q$-Schur algebras, the entire quantum $\frak{gl}_n$ over the rational function field $\mbq(\up)$ (with $v$ being an indeterminant) was realized by A. A. Beilinson, G. Lusztig and R. MacPherson (BLM) in \cite{BLM}.

Let $\Un$ be the Lusztig $\sZ$-form of quantum $\frak{gl}_n$, where $\sZ=\mbz[v,v^{-1}]$. We will give BLM realization of $\Un$ in this paper. More precisely, We will construct a certain $\sZ$-submodule of $\prod_{r\geq 0}\bfSr$, denoted by $\Vn$, where $\bfSr$ is the $q$-Schur algebra over $\mbq(v)$. We will show that $\Vn$ is a $\sZ$-subalgebra of $\prod_{r\geq 0}\bfSr$ and prove in \ref{realization} that $\Vn$ is isomorphic to $\Un$ as a $\sZ$-algebra. Similarly, we may construct the affine version of $\Vn$, denoted by $\afVn$, which is a certain $\sZ$-submodule of $\prod_{r\geq 0}\afbfSr$, where $\afbfSr$ is the affine $q$-Schur algebra over $\mbq(v)$. We conjecture that $\afVn$ is a $\sZ$-subalgebra of $\prod_{r\geq 0}\afbfSr$. If this conjecture is true, then $\afVn$ is isomorphic to
the $\sZ$-module $\ti{\fD}_\vtg(n)$ defined in \cite[(3.8.1.1)]{DDF}.

Let $k$ be a field containing
an $l$-th primitive root $\varepsilon$ of $1$ with $l\geq 1$ odd.
Specializing $v$ to $\varepsilon$, $k$ will be viewed as a $\sZ$-module.
Let $\Unk=\Un\ot_\sZ k$ and $\barUnk=\Unk/\lan K_i^l-1\mid 1\leq i\leq n-1\ran.$
We will prove that the algebra $\barUnk$ can be realized as a $k$-subalgebra of
$\prod_{r\geq 0}\Srk$, where $\Srk$ is the $q$-Schur algebra over $k$.

We organize this paper as follows. We recall some results of quantum $\frak{gl}_n$ and $q$-Schur algebras in \S2. We will establish some useful
multiplication formulas for $q$-Schur algebras
in \ref{formula 1} and \ref{formula 2}. A certain $\sZ$-submodule of $\prod_{r\geq 0}\bfSr$, denoted by $\Vn$, will be constructed in \S4. We will use \ref{formula 1} and \ref{formula 2} to prove that $\Vn$ is BLM realization of $\Un$. Furthermore, we will give realization of $\barUnk$ in \ref{realization of barUnk}.

Throughout this paper, let $\sZ=\mbz[v,v^{-1}]$,  where $v$ is an
indeterminate, and let $\mbq(v)$ be the fraction field of $\sZ$.
For $i\in\mbz$ let $[i]=\frac{v^i-v^{-i}}{v-v^{-1}}$ and $[\![i]\!]=\frac{v^{2i}-1}{v^2-1}$.
For integers $N,t$ with $t\geq 0$, let
\begin{equation*}
\leb{N\atop t}\rib=\frac{[N][N-1]\cdots[N-t+1]}{[t]^!}\in\sZ,\quad
\dleb{N\atop t}\drib=\frac{[\![N]\!][\![N-1]\!]\cdots[\![N-t+1]\!]}{[\![t]\!]^!}\in\sZ
\end{equation*}
where $[t]^{!}=[1][2]\cdots[t]$ and $[\![t]\!]^{!}=[\![1]\!][\![2]\!]\cdots[\![t]\!]$.
For $\mu\in\mbzn$ and $\la\in\mbnn$ let $\bbl{\mu\atop\la}\bbr=\bbl{\mu_1\atop\la_1}\bbr\cdots\bbl{\mu_n\atop\la_n}\bbr.$

\section{The quantum $\frak{gl}_n$ and the $q$-Schur algebra}

The below definition of quantum $\frak{gl}_n$ is a slightly
modified version of Jimbo \cite{Ji}; see \cite{GL,Ta}.
\begin{Def}\label{definition of U(infty)}
The quantum enveloping algebra of ${\frak {gl}}_n$ is the $\mbq(v)$-algebra $\bfUn$
presented by generators
$$E_i,\ F_i\quad(1\leq i\leq n-1),\  K_j,\  K_j^{-1}\quad(1\leq j\leq n)$$
and relations

$(a)\ K_{i}K_{j}=K_{j}K_{i},\ K_{i}K_{i}^{-1}=1;$

$(b)\ K_{i}E_j=v^{\dt_{i,j}-\dt_{i,j+1}} E_jK_{i};$

$(c)\ K_{i}F_j=v^{\dt_{i,j+1}-\dt_{i,j}} F_jK_i;$

$(d)\ E_iE_j=E_jE_i,\ F_iF_j=F_jF_i\ when\ |i-j|>1;$

$(e)\ E_iF_j-F_jE_i=\delta_{i,j}\frac {\widetilde
K_{i}-\widetilde K_{i}^{-1}}{v-v^{-1}},\ where \
\widetilde K_i =K_{i}K_{i+1}^{-1};$

$(f)\
E_i^2E_j-(v+v^{-1})E_iE_jE_i+E_jE_i^2=0\
 when\ |i-j|=1;$

$(g)\
F_i^2F_j-(v+v^{-1})F_iF_jF_i+F_jF_i^2=0\
 when\ |i-j|=1.$
\end{Def}

Following \cite{Lu90}, let $\Un$ be the $\sZ$-subalgebra of $\bfUn $
generated by all $E_i^{(m)}$, $F_i^{(m)}$, $K_i^{\pm 1}$ and
$\bbl {K_i;0 \atop t} \bbr$, where for $m,t\in\mathbb N$,
$$E_i^{(m)}=\frac{E_i^m}{[m]^!},\,\,F_i^{(m)}=\frac{F_i^m}{[m]^!},\text{ and }
\bigg[ {K_i;0 \atop t} \bigg] =
\prod_{s=1}^t \frac
{K_iv^{-s+1}-K_i^{-1}v^{s-1}}{v^s-v^{-s}}.$$

Let $\Thn$ be the set of all $n\times n$ matrices over
$\mathbb N$. Let $\Thnpm$ be the set of all $A\in\Thn$ whose diagonal entries are zero. Let $\Thnp$ (resp., $\Thnm$) be the subset of $\Thn$ consisting of those matrices $A$ with $a_{i,j}=0$ for all $i>j$ (resp., $i<j$). For $A\in\Thnpm$, write $A=A^++A^-$ with
$A^+\in\Thnp$ and $A^-\in\Thnm$.
For $A\in\Thnpm$ let
\begin{equation}\label{order}
E^{(A^+)}=\prod_{1\leq i\leq h<j\leq n}E_h^{(a_{i,j})}\ \text{and}
\ F^{(A^-)}=\prod_{1\leq j\leq h<i\leq
n}F_h^{(a_{i,j})}\end{equation}
 The orders in which the products $E^{(A^+)}$ and $F^{(A^-)}$ are
taken are defined as follows. Put
$$M_j=M_j(A^+)=E_{j-1}^{(a_{j-1,j})}(E_{j-2}^{(a_{j-2,j})}E_{j-1}^{(a_{j-2,j})})
\cdots(E_{1}^{(a_{1,j})}E_{2}^{(a_{1,j})}\cdots
E_{j-1}^{(a_{1,j})}).$$
 Similarly, put
$$M_j'=(F_{j-1}^{(a_{j,1})}\cdots
F_{2}^{(a_{j,1})}F_{1}^{(a_{j,1})})
\cdots(F_{j-1}^{(a_{j,j-2})}F_{j-2}^{(a_{j,j-2})})
F_{j-1}^{(a_{j,j-1})}.$$
Then $E^{(A^+)}=M_nM_{n-1}\cdots M_2$ and
$F^{(A^-)}=M_2'M_3'\cdots M_n'$.
According to \cite[4.5]{Lu90} and \cite[7.8]{Lu901} we have the following result.
\begin{Prop}\label{basis for Un}
The set $$\{E^{(A^+)}\prod_{1\leq i\leq n}K_i^{\dt_i}\leb{K_i;0\atop\la_i}\rib F^{(A^-)}\mid A\in\Thnpm,\,\dt,\la\in\mbnn,\,\dt_i\in\{0,1\},\,\forall i\}$$ forms a $\sZ$-basis of $U(n)$.
\end{Prop}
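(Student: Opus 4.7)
The plan is to deduce the claim from Lusztig's results via the triangular decomposition of $\Un$. Concretely, let $U^+(n)$, $U^-(n)$, $U^0(n)$ be the $\sZ$-subalgebras of $\Un$ generated respectively by the divided powers $E_i^{(m)}$, by the divided powers $F_i^{(m)}$, and by the elements $K_i^{\pm 1}$ together with $\bbl K_i;0\atop t\bbr$. Multiplication induces a $\sZ$-module isomorphism $U^-(n)\ot_\sZ U^0(n)\ot_\sZ U^+(n)\tong\Un$, by the general integral triangular decomposition established in \cite{Lu90}. It therefore suffices to exhibit $\sZ$-bases of the three factors indexed respectively by $\Thnm$, by $\{0,1\}^n\times\mbnn$, and by $\Thnp$.

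For the zero part, the elements $\prod_{i=1}^n K_i^{\dt_i}\bbl K_i;0\atop\la_i\bbr$ with $\dt\in\{0,1\}^n$ and $\la\in\mbnn$ form the standard Kostant-type $\sZ$-basis, obtained by tensoring together the rank-one bases for each commuting factor generated by $K_i^{\pm 1}$. For the positive part, Lusztig constructs in \cite{Lu90} a PBW-type $\sZ$-basis $\{\widetilde E^{(A^+)}\mid A^+\in\Thnp\}$ built from divided powers of the root vectors $E_{i,j}$ ($1\leq i<j\leq n$) taken with respect to a convex total order on positive roots. The task then is to verify that the Chevalley-monomial $E^{(A^+)}$ defined in~\eqref{order} expands in this PBW basis as $\widetilde E^{(A^+)}$ plus a $\sZ$-linear combination of the $\widetilde E^{(B^+)}$ with $B^+$ strictly smaller than $A^+$ in an appropriate partial order on $\Thnp$. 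Granted this unitriangularity, the $E^{(A^+)}$ themselves form a $\sZ$-basis of $U^+(n)$; the argument for $\{F^{(A^-)}\mid A^-\in\Thnm\}$ as a basis of $U^-(n)$ is identical after swapping the roles of $E$ and $F$.

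The main obstacle is the unitriangularity claim. It hinges on the fact that the product $E_i^{(a)}E_{i+1}^{(a)}\cdots E_{j-1}^{(a)}$ equals $E_{i,j}^{(a)}$ modulo a $\sZ$-linear combination of divided-power monomials in root vectors $E_{i',j'}$ with $i\leq i'\leq j'\leq j$ and $(i',j')\neq(i,j)$. The carefully prescribed orders in the definitions of $M_j$ and of $E^{(A^+)}=M_n M_{n-1}\cdots M_2$ are precisely what ensures that, as one multiplies these factors together, the resulting lower-order corrections assemble compatibly with a single well-chosen partial order on $\Thnp$. This delicate bookkeeping is the content of \cite[4.5]{Lu90} and \cite[7.8]{Lu901}; combining it with the triangular decomposition and the zero-part basis gives the asserted $\sZ$-basis of $\Un$.
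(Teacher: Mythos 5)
Your argument reduces the proposition to the integral triangular decomposition of $\Un$ together with Lusztig's monomial-basis theorem for $U^{\pm}(n)$ and the Kostant-type basis of $U^0(n)$, resting ultimately on \cite[4.5]{Lu90} and \cite[7.8]{Lu901} — which is exactly how the paper obtains the result, since it states the proposition by direct appeal to those two references. Two small quibbles: the tensor order should be $U^+(n)\ot_\sZ U^0(n)\ot_\sZ U^-(n)$ rather than $U^-\ot U^0\ot U^+$ so as to match the stated basis elements $E^{(A^+)}\prod_i K_i^{\dt_i}\leb{K_i;0\atop\la_i}\rib F^{(A^-)}$, and Lusztig's actual proof of the monomial-basis theorem in \cite[7.8]{Lu901} proceeds by unitriangular comparison with the \emph{canonical} basis rather than with a PBW basis as in your sketch (though the PBW route you describe is also viable).
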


Schur algebras are certain important finite-dimensional algebras. It is used to link representation of  general linear groups and symmetric groups. $q$-Schur algebras are quantum deformation of Schur algebras, which is defined by certain endomorphism algebras arising from Hecke algebras of type $A$. We now follow \cite{DJ89,DJ91} to recall the definition of $q$-Schur algebras as follows.
Let $\fS_r$ be the symmetric group on $r$ letters. The symmetric group $\fSr$ is generated by the set $\{s_i:=(i,i+1)\mid 1\leq i\leq r-1\}$.
The  Hecke algebra $\Hr$ associated with $\fS_r$  is  the $\sZ$-algebra generated by
$T_i$ ($1\leq i\leq r-1$),
with the following relations:
$$(T_i+1)(T_i-q)=0,\;\;
T_iT_{i+1}T_i=T_{i+1}T_iT_{i+1},\;\;T_iT_j=T_jT_i\;(|i-j|>1).
$$
where $q=\up^2$. If $w=s_{i_1}s_{i_2}\cdots s_{i_m}$ is reduced let $T_w=T_{i_1}T_{i_2}\cdots T_{i_m}$. Then the set $\{T_w\mid w\in\fS_r\}$ forms a $\sZ$-basis for $\Hr$.
Let $\Lanr=\{\la\in\mbnn\mid\sg(\la):=\sum_{1\leq i\leq n}\la_i=r\}$.
 For $\la\in\La(n,r)$, let $\fS_{\la}$ be the
Young subgroup of $\fS_r$ and let $x_{\la}=\sum_{w\in\frak
S_{\la}}T_w$. Let $\bfHr=\Hr\ot_\sZ\mbq(\up)$. The endomorphism algebras
$$\Sr:=\End_{\Hr}\bigg(
\bop_{\la\in\La(n,r)}x_{\la}\Hr\bigg),\quad \bfSr:=\End_{\bfHr}\bigg(
\bop_{\la\in\La(n,r)}x_{\la}\bfHr\bigg)$$ are called
$q$-Schur algebras.
For $\la,\mu\in\La(n,r)$ let $\msD_{\la,\mu}$ be the set of
distinguished double $(\frak S_\la,\frak S_\mu)$-coset
representatives.  For
$\la,\mu\in\La(\eta,r)$, $d\in\msD_{\la,\mu}$, define
$\phi_{\la\mu}^d\in\Sr$ by
\begin{equation*}
\phi_{\la\mu}^d(x_\nu h)=\delta_{\mu,\nu}\sum_{x\in\fS_\la
d\fS_\mu}T_xh.
\end{equation*} According to \cite[1.4]{DJ91}, the set $\{\phi_{\la\mu}^d\mid \la,\mu\in\Lanr,\,d\in\msD_{\la,\mu}\}$ forms a $\sZ$-basis for $\Sr$.

Let
$\Thnr=\big\{A\in\Thn\,\big|\,\sg(A):=\sum_{1\leq i,j\leq n}a_{i,j}=r\big\}.$
The basis for $\Sr$ can also be indexed by the set $\Thnr$, which we now describe.
For $1\leq i\leq n$, and $\la\in\La(n,r)$ let
\begin{equation*}
R_{i}^{\la}=\bigg\{\sum_{1\leq t\leq i-1}\la_t+1,\sum_{1\leq t\leq i-1}\la_t+2,\ldots,\sum_{1\leq t\leq i-1}\la_t+\la_i\bigg\},
\end{equation*}
According to \cite[1.3.10]{JK}, there is
a bijective map
\begin{equation*}
\jmath:\{(\la, d,\mu)\mid
d\in\msD_{\la,\mu},\la,\mu\in\Lanr\}\lra\Thnr
\end{equation*}
sending $(\la, d,\mu)$ to $A=(a_{k,l})$, where $a_{k,l}=|R_k^\la\cap dR_l^\mu|$ for all $k,l\in\mbz$.
If $\la,\mu\in\Lanr$ and $d\in\msD_{\la,\mu}$ are such that $A=
\jmath(\la,d,\mu)$, let
\begin{equation*}
[A]=\up^{-d_A}\phi_{\la,\mu}^d,\quad\text{ where } \quad
d_{A}=\sum_{1\leq i\leq n\atop i\geq k,j<l}a_{i,j}a_{k,l}.
\end{equation*}
Then the set $\{[A]\mid A\in\Thnr\}$ forms a $\sZ$-basis for $\Sr$.

The geometric definition of $q$-Schur algebra was given in \cite[1.2]{BLM}.
It is proved in \cite[A.1]{Du92} that the two definitions of $q$-Schur algebras are equivalent.
According to  \cite[1.2,1.3]{BLM}, for $\lambda\in\Lanr$ and $A\in\Thnr$, we have
\begin{equation}\label{[diag(la)][A]}
\begin{aligned}
\ [\diag(\la)][A]=
\begin{cases}[A] & \text{if}\ \lambda=ro(A)\\
0 & \text{otherwise;}
\end{cases}
\end{aligned} \quad \text{and}\
\begin{aligned}
\ [A][\diag(\la)]=
\begin{cases}[A] & \text{if}\ \lambda=co(A)\\
0 & \text{otherwise,}
\end{cases}
\end{aligned}
\end{equation}
where $\ro(A)=(\sum_ja_{1,j},\cdots,\sum_ja_{n,j})$ and
$\co(A)=(\sum_ia_{i,1},\cdots,\sum_ia_{i,n})$ are the sequences of
row and column sums of $A$.

The algebra $\bfUn$ and the $q$-Schur algebra $\bfSr$ are
related by an algebra epimorphism $\zeta_r$ which we now
describe. For $A\in\Thnpm$, $\dt\in\mbzn$ and $\la\in\mbnn$
let
\begin{equation*}
\begin{split}
A(\dt,\la,r)&=\sum_{\mu\in\La(n,r-\sg(A))}v^{\mu\centerdot\dt}
\leb{\mu\atop\la}\rib[A+\diag(\mu)]\in\bfSr;\\
A(\dt,r)&=\sum_{\mu\in\La(n,r-\sg(A))}v^{\mu\centerdot\dt}
[A+\diag(\mu)]\in\bfSr,
\end{split}
\end{equation*}
where $\mu\centerdot\dt=\sum_{1\leq i\leq n}\mu_i\dt_i$.
Furthermore we set
\begin{equation*}
\begin{split}
A(\dt,\la)&=(A(\dt,\la,r))_{r\geq 0}\in\prod_{r\geq 0}\bfSr;\\
A(\dt)&=(A(\dt,r))_{r\geq 0}\in\prod_{r\geq 0}\bfSr.
\end{split}
\end{equation*}
Then by definition we have $A(\dt)=A(\dt,\bfl)$, where
$\bfl=(0,\cdots,0)\in\mbnn$.
For $1\leq
i,j\leq n$, let $E_{i,j}\in\Thn$ be the matrix $(a_{k,l})$ with
$a_{k,l}=\delta_{i,k}\delta_{j,l}$.
According to \cite{BLM}, there is
an algebra epimorphism $$\zeta_r:\bfUn
\twoheadrightarrow\bfSr$$ satisfying
$\zeta_r(E_h)=E_{h,h+1}(\mathbf 0,r)$, $\zeta_r(K_1^{j_1}K_2^{j_2}\cdots
K_n^{j_n})=0(\mathbf j,r)$ and $\zeta_r(F_h)=E_{h+1,h}(\mathbf
0,r)$, for $1\leq h\leq n-1$ and $\bfj\in\mbzn$.

We conclude this section by recalling an important triangular relation in $q$-Schur algebras.
For $A=(a_{s,t})\in\Thn$ and $i<j$, let
$\sg_{i,j}(A)=\sum_{s\leq i;t\geq j}a_{s,t}$ and
$\sg_{j,i}(A)=\sum_{s\leq i;t\geq j}a_{t,s}$. Define $A'\pr A$ iff
$\sg_{i,j}(A')\leq\sg_{i,j}(A)$ and $\sg_{j,i}(A')\leq\sg_{j,i}(A)$
for all $1\leq i<j\leq n$. Put $A'\p A$ if $A'\pr A$ and, for some
pair $(i,j)$ with $i\not=j$, $\sg_{i,j}(A')<\sg_{i,j}(A)$.
According to \cite[5.3 and 5.4(c)]{BLM}, we have the following result.
\begin{Prop}\label{tri}
For $A\in\Thnpm$, we have
$$\prod_{1\leq i\leq h<j\leq n}(a_{i,j}E_{h,h+1})(\bfl)
\cdot\prod_{1\leq j\leq h<i\leq
 n}(a_{i,j}E_{h+1,h})(\bfl)=A(\bfl)+f$$
where the ordering of the products in the left hand side of the above equation is the same as in \eqref{order} and
$f$ is the $\mbq(v)$-linear combination of $B(\bfj)$ with $B\in\Thnpm$, $B\p A$ and $\bfj\in\mbzn$.
\end{Prop}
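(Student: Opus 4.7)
The plan is to establish the identity componentwise in each $\bfSr$, $r\geq 0$. Unpacking definitions, $(mE_{h,h+1})(\bfl,r)=\sum_{\mu\in\La(n,r-m)}[mE_{h,h+1}+\diag(\mu)]$ and similarly for the $F$-factors, so the target identity in $\bfSr$ is a triangularity assertion relating an iterated product of such sums to $A(\bfl,r)=\sum_{\mu\in\La(n,r-\sg(A))}[A+\diag(\mu)]$.

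Following the strategy of \cite[\S 5.3, 5.4(c)]{BLM}, I would argue by induction on $\sg(A^+):=\sum_{i<j}a_{i,j}$, repeatedly applying the multiplication formula of \ref{formula 1} to compute $\prod_{1\leq i\leq h<j\leq n}(a_{i,j}E_{h,h+1})(\bfl,r)$. At each step, a product $(mE_{h,h+1})(\bfl,r)\cdot[C]$ expands via \ref{formula 1} as a principal basis element plus a sum of $[C']$'s strictly smaller in the partial order $\pr$. The ordering prescribed by \eqref{order} --- first $M_n$, then $M_{n-1}$, \ldots, with each $M_j$ itself built from the diagonal outward --- is designed precisely so that the principal terms accumulate into the sum defining $A^+(\bfl,r)$ without cancellation or additional contributions of the same $\pr$-order. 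The parallel computation using \ref{formula 2} handles the $F$-factors, producing $A^-(\bfl,r)$ plus lower-order terms. Multiplying the two half-products and using the orthogonality \eqref{[diag(la)][A]} to collapse the intermediate diagonal factors (matching row and column sums) yields $A(\bfl,r)$ plus a $\mbq(v)$-combination of basis elements $[B+\diag(\mu')]$ with $B\p A$; regrouping such terms by the off-diagonal part $B$ and collecting the $v$-coefficients (which are polynomial expressions in $v^{\mu'_i}$) rewrites this error as a sum $\sum c_{B,\bfj}B(\bfj,r)$ with $B\p A$ and $\bfj\in\mbzn$, giving the stated form of $f$.

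The main obstacle is the triangularity bookkeeping throughout the iteration: one must verify both that the accumulated principal terms really assemble into $A(\bfl,r)$ (rather than into some $A'(\bfj,r)$ with $A'\neq A$ contributing at the top level) and that every secondary term satisfies $B\p A$ strictly, not merely $B\pr A$. Both points hinge on the specific ordering of factors in \eqref{order} and on the explicit form of the multiplication formulas \ref{formula 1} and \ref{formula 2}. Since exactly this analysis is carried out in \cite[\S 5]{BLM} for the finite $q$-Schur algebra, and all structural ingredients (the multiplication formulas and the partial order $\pr$) transfer directly to our componentwise product setting, the conclusion follows.
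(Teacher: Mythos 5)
Your plan matches the paper's: Proposition \ref{tri} is, as you say, simply [BLM, 5.3 and 5.4(c)] transported componentwise into $\prod_{r\geq 0}\bfSr$, and the paper gives no independent argument --- it states the result as a direct consequence of that citation, exactly as your final sentence does.

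One bookkeeping slip worth flagging: the step-by-step expansion of $\prod(a_{i,j}E_{h,h+1})(\bfl,r)$ is driven by the basic multiplication formula \ref{BLM formulas} (i.e.\ [BLM, 3.4]), not by \ref{formula 1}, which governs multiplication by a \emph{diagonal} element $0(\ga,\mu)$ and plays no role in this triangularity argument; the closest relevant formula in this paper is \ref{formula 2}. Moreover, invoking \ref{formula 1} and \ref{formula 2} (both from \S3) to justify the \S2 Proposition \ref{tri} reverses the paper's logical order --- it is not actually circular, since neither lemma depends on \ref{tri}, but it is unnecessary and somewhat misleading given that \ref{BLM formulas} is the formula [BLM] itself uses in \S5 and is all that is needed.
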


\section{The multiplication formulas for $q$-Schur algebras}

We will derive certain useful multiplication formulas for $q$-Schur algebras in \ref{formula 1} and \ref{formula 2}.

We need  some preparation before proving \ref{formula 1} and \ref{formula 2}.
Let $\bar\ :\sZ\ra\sZ$ be the ring homomorphism defined by $\bar v=v^{-1}$.
The following impotent multiplication formulas for  $q$-Schur algebras was proved in \cite[3.4]{BLM}.
\begin{Prop}\label{BLM formulas}
Let $1\leq h\leq n-1$, $A\in\Thnr$ and $\la=\ro(A)$.   Let $B_m=\diag(\la)+m E_{h,h+1}-m E_{h+1,h+1}$ and
$C_m=\diag(\la)-m E_{h,h}+m E_{h+1,h}.$ Then in $\Sr$

$(1)\ \displaystyle
[B_m]\cdot[A]=\sum_{\bft\in\La(n,m)\atop \forall
u\in\mbz,t_u\leq
a_{h+1,u}}\up^{\bt(\bft,A)}\prod_{u\in\mbz}\ol{\dleb{a_{h,u}+t_u\atop
t_u}\drib} \biggl[
A+\sum_{u\in\mbz}t_u(\afE_{h,u}-\afE_{h+1,u})\biggr];$\\
for all $0\le m\le\la_{h+1}$, where $\bt(\bft,A)=\sum_{j\geq
u}a_{h,j}t_u-\sum_{j>u}a_{h+1,j}t_u+\sum_{u<u'}t_ut_{u'}$.

$(2)\ \displaystyle
[C_m]\cdot[A]=\sum_{\bft\in\La(n,m)\atop \forall
u\in\mbz,t_u\leq
a_{h,u}}\up^{\ga(\bft,A)}\prod_{u\in\mbz}\ol{\dleb{a_{h+1,u}+t_u\atop
t_u}\drib} \biggl[
A-\sum_{u\in\mbz}t_u(\afE_{h,u}-\afE_{h+1,u})\biggr],$\\
for all $0\le m\le\la_{h}$, where $\ga(\bft,A)=\sum_{j\leq
u}a_{h+1,j}t_u-\sum_{j<u}a_{h,j}t_u+\sum_{u<u'}t_ut_{u'}$.
\end{Prop}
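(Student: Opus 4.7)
The plan is to perform the computation inside the Hecke algebra realization of $\bfSr$ and then re-express the answer in the matrix basis $\{[C]\}$. Writing $\mu=\ro(B_m)$ and $\nu=\co(A)$, the matrix $B_m$ corresponds under $\jmath$ to a triple $(\mu,d_m,\la)$ in which $d_m\in\msD_{\mu,\la}$ is the short coset representative that promotes exactly $m$ entries from the block $R_{h+1}^\la$ up to the bottom of $R_h^\mu$. With $A=\jmath(\la,d,\nu)$, the normalizations $[B_m]=v^{-d_{B_m}}\phi^{d_m}_{\mu,\la}$ and $[A]=v^{-d_A}\phi^d_{\la,\nu}$ reduce the problem to computing $(\phi^{d_m}_{\mu,\la}\circ\phi^d_{\la,\nu})(x_\nu)=\bigl(\sum_{x\in\fS_\mu d_m\fS_\la}T_x\bigr)\bigl(\sum_{y\in\fS_\la d\fS_\nu}T_y\bigr)$ and regrouping the product by $(\fS_\mu,\fS_\nu)$-double cosets.

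Next I would parametrize which $(\fS_\mu,\fS_\nu)$-double cosets arise in the expansion. Each appearing double coset is determined by a tuple $\bft=(t_u)\in\La(n,m)$ with $t_u\le a_{h+1,u}$, recording how many of the $m$ promoted basis elements land in column $u$ of the resulting matrix $C=A+\sum_ut_u(E_{h,u}-E_{h+1,u})$; the bound $t_u\le a_{h+1,u}$ is forced because the promoted elements must come from row $h+1$ of $A$. For each fixed $\bft$, a Mackey-style decomposition inside the Young subgroup acting on the column-$u$ data produces, as the inner double-coset count, a factor of the Gaussian binomial $\dleb{a_{h,u}+t_u\atop t_u}\drib$; after the $v^{-d_{B_m}-d_A}$ normalization is absorbed and the convention $q=v^2$ is applied, this becomes the overlined version $\ol{\dleb{a_{h,u}+t_u\atop t_u}\drib}$ appearing in the statement.

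It remains to check that the residual $v$-exponent is precisely $\bt(\bft,A)=\sum_{j\ge u}a_{h,j}t_u-\sum_{j>u}a_{h+1,j}t_u+\sum_{u<u'}t_ut_{u'}$. Each of the three summands has a transparent combinatorial meaning: (i) inversions created as the $t_u$ promoted vectors slide past the existing row-$h$ entries standing weakly to their right, (ii) inversions destroyed by removing those $t_u$ vectors from row-$h+1$ columns strictly to the right of $u$, and (iii) inversions among the promoted blocks themselves across distinct columns $u<u'$. Matching the length function of the compound permutation $d_md$ with $d_{B_m}+d_A-d_C+\bt(\bft,A)$ is the main technical obstacle and the only genuinely delicate bookkeeping step; all other ingredients are standard Hecke-algebra manipulations. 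Part (2) then follows either by repeating the argument with rows $h$ and $h+1$ interchanged, or more economically by applying the anti-automorphism of $\sH(r)$ fixing each $T_i$, which sends a product $\phi\circ\psi$ to $\psi\circ\phi$ and so converts formula (1) into formula (2).
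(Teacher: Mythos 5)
The paper does not prove this proposition --- it is quoted directly from \cite[3.4]{BLM}, where it is established geometrically by expanding the convolution product in the algebra of $GL_r(\mathbb F_q)$-invariant functions on pairs of $n$-step flags in $\mathbb F_q^r$ and counting intermediate subspaces. Your plan to rederive the formula inside the Dipper--James endomorphism algebra via Hecke double-coset combinatorics is therefore a genuinely different route, available in principle thanks to Du's isomorphism \cite[A.1]{Du92}, and the parametrization of contributing $(\fS_\mu,\fS_\nu)$-double cosets by $\bft\in\La(n,m)$ with $t_u\le a_{h+1,u}$, with $\dleb{a_{h,u}+t_u\atop t_u}\drib$ arising as a Poincar\'e-polynomial coset count, is the right picture.

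As written, however, the proposal has two concrete errors and one unfulfilled promise. (i) The identity $(\phi^{d_m}_{\mu,\la}\circ\phi^d_{\la,\nu})(x_\nu)=\bigl(\sum_{x\in\fS_\mu d_m\fS_\la}T_x\bigr)\bigl(\sum_{y\in\fS_\la d\fS_\nu}T_y\bigr)$ is false: writing $\sum_{y\in\fS_\la d\fS_\nu}T_y=x_\la T_dh$ and $\sum_{x\in\fS_\mu d_m\fS_\la}T_x=g\,T_{d_m}x_\la$, the right-hand side equals $g\,T_{d_m}x_\la^2T_dh=P_\la(q)\,g\,T_{d_m}x_\la T_dh$, which is $P_\la(q)=\sum_{w\in\fS_\la}q^{\ell(w)}$ times the correct composition $\phi^{d_m}_{\mu,\la}(x_\la T_dh)$; you must either divide out this Poincar\'e polynomial or decompose $d_m\fS_\la d$ directly. (ii) The anti-automorphism of $\sH(r)$ fixing each $T_i$ induces $[A]\mapsto[\tA]$ (transpose) on $\Sr$, so applying it to formula (1) produces a formula for $[\tA][\tB_m]$ --- right multiplication by $\tB_m$ --- and not for $[C_m][A]$; note that $\tB_m$ and $C_m$ do not even have the same row and column sums, so this cannot be repaired by relabeling. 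To transport (1) to (2) one should instead use the index-reversing automorphism $i\mapsto n+1-i$ of $\Sr$, or simply redo the computation. (iii) The step you yourself flag as ``the main technical obstacle'' --- reconciling the lengths $\ell(d_m),\ell(d)$ with the normalizations $d_{B_m},d_A,d_C$ to produce exactly $\bt(\bft,A)$ --- carries essentially all of the content of the proposition and is left entirely unverified, so what you have is an outline of a plausible alternative argument rather than a proof.
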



We also need the following formulas for Gaussian binomial coefficient (see \cite{Liu}).
\begin{Lem}\label{binomial}
For $m,n\in\mbz$, $a,b\in\mbn$ we have

$(1)$ $\dbbl{n\atop a}\dbbr=\sum\limits_{0\leq j\leq a}v^{2(m-j)(a-j)}
\dbbl{m\atop j}\dbbr\dbbl{n-m\atop a-j}\dbbr;$

$(2)$
$\dbbl{m\atop a}\dbbr \dbbl{m\atop b}\dbbr=\sum\limits_{0\leq c\leq \min\{a,b\}}
v^{2(b-c)(a-c)}\dbbl{m\atop a+b-c}\dbbr \dbbl{a+b-c\atop
c,\,a-c,\,b-c}\dbbr,$
where $\dbbl{a+b-c\atop
c,\,a-c,\,b-c}\dbbr=\frac{[\![a+b-c]\!]^!}{[\![c]\!]^!
[\![a-c]\!]^![\![b-c]\!]^!}.$
\end{Lem}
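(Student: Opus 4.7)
Both formulas are classical identities for Gaussian binomial coefficients in the parameter $q=v^{2}$ and are attributed in the statement to \cite{Liu}; the task is therefore bookkeeping rather than a new idea. The plan is to prove (1) via the $q$-binomial theorem and then deduce (2) from (1) together with elementary factorial manipulations.

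For (1), I would use the standard generating-function argument. Recall the $q$-binomial theorem
\[
\prod_{i=0}^{N-1}(1+q^{i}z)=\sum_{k=0}^{N}q^{\binom{k}{2}}\dbbl{N\atop k}\dbbr z^{k},
\]
which is a polynomial identity in $z$ over $\sZ$. Applying it with $N=n$ gives one expansion of $\prod_{i=0}^{n-1}(1+q^{i}z)$; applying it with $N=m$ in the variable $z$ and, separately, with $N=n-m$ in the variable $q^{m}z$ gives a second expansion of the same product via
\[
\prod_{i=0}^{n-1}(1+q^{i}z)=\prod_{i=0}^{m-1}(1+q^{i}z)\cdot\prod_{i=0}^{n-m-1}(1+q^{m+i}z).
\]
Comparing coefficients of $z^{a}$ in the two expressions and using the elementary identity $\binom{j}{2}+\binom{a-j}{2}+m(a-j)-\binom{a}{2}=(m-j)(a-j)$ yields (1) after substituting $q=v^{2}$.

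For (2), the plan is to apply (1) with $(n,m,a)$ replaced by $(m,a,b)$ to obtain
\[
\dbbl{m\atop b}\dbbr=\sum_{j\geq 0}v^{2(a-j)(b-j)}\dbbl{a\atop j}\dbbr\dbbl{m-a\atop b-j}\dbbr,
\]
multiply both sides by $\dbbl{m\atop a}\dbbr$, and then simplify using the two factorial identities
\[
\dbbl{m\atop a}\dbbr\dbbl{m-a\atop b-j}\dbbr=\dbbl{m\atop a+b-j}\dbbr\dbbl{a+b-j\atop a}\dbbr,\qquad\dbbl{a\atop j}\dbbr\dbbl{a+b-j\atop a}\dbbr=\dbbl{a+b-j\atop j,\,a-j,\,b-j}\dbbr,
\]
each of which is immediate by writing both sides in terms of $[\![\,\cdot\,]\!]^{!}$ and cancelling. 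Setting $c=j$ then produces (2), with the support condition $0\leq c\leq\min\{a,b\}$ enforced automatically by the vanishing of $\dbbl{a\atop j}\dbbr$ for $j>a$ and of $\dbbl{m-a\atop b-j}\dbbr$ for $j>b$.

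The only point requiring care is the arithmetic of the $v^{2}$-exponents in the proof of (1); everything else is routine verification from the explicit formulas for $[\![\,\cdot\,]\!]^{!}$. I do not anticipate any serious obstacle, since the lemma is a standard identity in the theory of Gaussian polynomials rather than a new structural statement about $q$-Schur algebras, and the derivation of (2) from (1) proceeds by a direct two-step substitution.
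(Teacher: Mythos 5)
The paper does not actually prove this lemma --- it is stated with only a pointer to the unpublished note \cite{Liu} --- so there is no internal argument to compare against. Your plan is the standard one and it is essentially correct. Part (1) is the $q$-Vandermonde identity; the exponent arithmetic $\binom{j}{2}+\binom{a-j}{2}+m(a-j)-\binom{a}{2}=(m-j)(a-j)$ does check out, so after setting $q=v^2$ the generating-function comparison produces exactly the stated formula. The two auxiliary identities you invoke for part (2), namely $\dbbl{m\atop a}\dbbr\dbbl{m-a\atop c}\dbbr=\dbbl{m\atop a+c}\dbbr\dbbl{a+c\atop a}\dbbr$ and $\dbbl{a\atop j}\dbbr\dbbl{a+b-j\atop a}\dbbr=\dbbl{a+b-j\atop j,\,a-j,\,b-j}\dbbr$, are also correct, and the substitution $c=j$ together with the vanishing of $\dbbl{a\atop j}\dbbr$ for $j>a$ gives (2) on the stated support. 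Two small points deserve a sentence in a full write-up. First, the $q$-binomial theorem directly proves (1) only for $n\geq m\geq 0$, whereas the lemma asserts it for all $m,n\in\mbz$; one should remark that for fixed $a$ both sides of (1) are Laurent polynomials in $q^m$ and $q^n$ with coefficients in $\mbq(q)$, so agreement on the infinite set $\{n\geq m\geq 0\}$ forces agreement for all integer $m,n$. Second, since $m$ may be negative, the auxiliary identities in (2) should be verified via the product form $\dbbl{m\atop a}\dbbr=[\![m]\!][\![m-1]\!]\cdots[\![m-a+1]\!]/[\![a]\!]^!$ rather than by literal cancellation of $[\![m]\!]^!$, which is undefined for $m<0$; the identities themselves hold for all $m\in\mbz$, but the phrase ``writing both sides in terms of $[\![\,\cdot\,]\!]^!$'' is only literally valid in the non-negative range.
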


Let $\leq$ be the partial order on $\mbnn$ defined by setting, for $\la,\mu\in\mbnn$,
$\la\leq\mu$ if and only if $\la_i\leq\mu_i$ for $1\leq i\leq n$.
For $\la,\al,\bt,\ga\in\mbnn$ with $\la=\al+\bt+\ga$ let $$\leb{\la\atop\al,\,\bt,\,\ga}
\rib=\prod\limits_{1\leq i\leq n}\frac{[\la_i]^!}{[\al_i]^![\bt_i]^![\ga_i]^!}.$$
The above lemma immediately yields the following corollary.




\begin{Coro}\label{mul gauss binomial}
For $\la,\mu\in\mbnn$ and $\al,\bt\in\mbzn$ we have

$(1)$
$\bbl{\al+\bt\atop\la}\bbr=\sum\limits_{\mu\in\mbnn,\,\mu\leq\la}
v^{\al\centerdot(\la-\mu)-\mu\centerdot\bt}
\bbl{\al\atop\mu}\bbr\bbl{\bt\atop\la-\mu}\bbr;$

$(2)$
$\bbl{\al\atop\la}\bbr\bbl{\al\atop\mu}\bbr=
\sum\limits_{\ga\in\mbnn\atop\ga\leq\la,\,\ga\leq\mu}v^{\la\centerdot\mu
-\al\centerdot\ga}\bbl{\la+\mu-\ga\atop\ga,\,\la-\ga,\,\mu-\ga}
\bbr\bbl{\al\atop\la+\mu-\ga}\bbr$.
\end{Coro}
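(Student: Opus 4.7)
The plan is to deduce both identities from Lemma~\ref{binomial} by applying those one-variable formulas coordinate-by-coordinate and then translating from the $\dleb\cdot\drib$-version of the Gaussian binomial coefficient (used in Lemma~\ref{binomial}) to the $\leb\cdot\rib$-version (used in the corollary). The bridge between the two conventions is the elementary identity $[\![i]\!]=v^{i-1}[i]$, which follows directly from the defining expressions. This yields $[\![t]\!]^!=v^{\binom{t}{2}}[t]^!$ and hence the key conversion formula
\begin{equation*}
\dleb{N\atop t}\drib=v^{t(N-t)}\leb{N\atop t}\rib
\end{equation*}
for all $N\in\mbz$, $t\in\mbn$. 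A parallel calculation with the multinomial Gaussian coefficient yields the analogous identity $\dbbl{a+b-c\atop c,\,a-c,\,b-c}\dbbr=v^{c(a-c)+c(b-c)+(a-c)(b-c)}\leb{a+b-c\atop c,\,a-c,\,b-c}\rib$, which will be needed for part~(2).

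For part~(1), take the product over $i=1,\ldots,n$ of the identity in Lemma~\ref{binomial}(1) applied with $n=\al_i+\bt_i$, $m=\al_i$, $a=\la_i$, $j=\mu_i$. Rewriting every Gaussian binomial in $\leb\cdot\rib$ form produces per coordinate the $v$-exponent
\begin{equation*}
2(\al_i-\mu_i)(\la_i-\mu_i)+\mu_i(\al_i-\mu_i)+(\la_i-\mu_i)(\bt_i-\la_i+\mu_i)-\la_i(\al_i+\bt_i-\la_i),
\end{equation*}
and direct expansion shows that all the quadratic terms in $\la_i,\mu_i$ cancel, leaving $\al_i(\la_i-\mu_i)-\mu_i\bt_i$. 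Summing over $i$ gives exactly $\al\centerdot(\la-\mu)-\mu\centerdot\bt$, as required.

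For part~(2), take the product of Lemma~\ref{binomial}(2) with $m=\al_i$, $a=\la_i$, $b=\mu_i$, $c=\ga_i$, and apply the same conversion together with the multinomial one above. The residual per-coordinate $v$-exponent is
\begin{equation*}
2(\mu_i-\ga_i)(\la_i-\ga_i)+(\la_i+\mu_i-\ga_i)(\al_i-\la_i-\mu_i+\ga_i)+\la_i\mu_i-\ga_i^2-\la_i(\al_i-\la_i)-\mu_i(\al_i-\mu_i),
\end{equation*}
and once again the quadratic terms in $\la_i,\mu_i,\ga_i$ cancel, leaving exactly $\la_i\mu_i-\al_i\ga_i$, which on summation produces $\la\centerdot\mu-\al\centerdot\ga$.

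The only real obstacle is the bookkeeping of $v$-exponents; no conceptual step is involved beyond the single passage between the two conventions for the Gaussian binomial coefficient.
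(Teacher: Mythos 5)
Your proof is correct and follows the route the paper implicitly takes: the paper's own text is simply ``The above lemma immediately yields the following corollary'' with no written argument, so your contribution is to supply the missing bookkeeping, and your conversion $\dleb{N\atop t}\drib=v^{t(N-t)}\leb{N\atop t}\rib$, the multinomial analogue, and the per-coordinate exponent cancellations in both parts all check out.
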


We now use \ref{BLM formulas} and \ref{mul gauss binomial} to prove \ref{formula 1} and \ref{formula 2}.

\begin{Lem}\label{formula 1}
For $A\in\Thnpm$, $\la,\mu\in\mbnn$ and $\dt,\ga\in\mbzn$
we have
$$0(\ga,\mu)A(\dt,\la)=\sum_{\nu\in\mbnn,\,\nu\leq\mu}a_\nu A(\ga+\dt-\nu,\la+\mu-\nu),$$
where $0$ stands for the zero matrix and
$$a_\nu=\sum_{\bfj\in\mbnn\atop\nu-\la\leq\bfj\leq\nu}
v^{\ro(A)\centerdot(\ga+\mu-\bfj)+\la\centerdot(\mu-\bfj)}
\leb{\ro(A)\atop\bfj}\rib\leb{\la+\mu-\nu\atop\nu-\bfj,\,\la-\nu+\bfj,\,
\mu-\nu}\rib.$$
\end{Lem}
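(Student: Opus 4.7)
The plan is to prove the formula componentwise in each $\bfSr$, i.e.\ show that
$0(\ga,\mu,r)\cdot A(\dt,\la,r)$ equals the claimed sum in $\bfSr$ for every $r\geq 0$, and then combine over $r$ to obtain the identity in $\prod_r\bfSr$. The argument is purely a bookkeeping computation: expand each factor as a $\mbq(v)$-linear combination of basis elements $[A+\diag(\mu')]$, use \eqref{[diag(la)][A]} to collapse the product of a diagonal basis element with a non-diagonal one, and then rewrite the resulting product of Gaussian binomials using \ref{mul gauss binomial}.

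Concretely, I would first expand
$$0(\ga,\mu,r)\cdot A(\dt,\la,r)=\sum_{\nu,\mu'}v^{\nu\centerdot\ga+\mu'\centerdot\dt}\bbl{\nu\atop\mu}\bbr\bbl{\mu'\atop\la}\bbr[\diag(\nu)][A+\diag(\mu')],$$
and apply \eqref{[diag(la)][A]} to force $\nu=\ro(A)+\mu'$. This yields a single sum over $\mu'\in\La(n,r-\sg(A))$ whose coefficient contains the factor $\bbl{\ro(A)+\mu'\atop\mu}\bbr\bbl{\mu'\atop\la}\bbr$. Next I apply \ref{mul gauss binomial}(1) with $\al=\ro(A)$ and $\bt=\mu'$ to split $\bbl{\ro(A)+\mu'\atop\mu}\bbr$ into a sum over $\bfj\leq\mu$ of products $\bbl{\ro(A)\atop\bfj}\bbr\bbl{\mu'\atop\mu-\bfj}\bbr$. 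Then I apply \ref{mul gauss binomial}(2) with $\al=\mu'$ to the resulting product $\bbl{\mu'\atop\mu-\bfj}\bbr\bbl{\mu'\atop\la}\bbr$, producing a sum over an auxiliary index $\eta$ with $\eta\leq\mu-\bfj$ and $\eta\leq\la$ of multinomials $\bbl{\la+\mu-\bfj-\eta\atop\eta,\mu-\bfj-\eta,\la-\eta}\bbr$ times a single $\bbl{\mu'\atop\la+\mu-\bfj-\eta}\bbr$.

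The change of variables $\nu:=\bfj+\eta$ converts the three inequalities on $(\bfj,\eta)$ into the range $\nu\leq\mu$, $\nu-\la\leq\bfj\leq\nu$ that appears in the statement, and makes the multinomial become $\bbl{\la+\mu-\nu\atop\nu-\bfj,\la-\nu+\bfj,\mu-\nu}\bbr$. Collecting all powers of $v$ I would obtain an exponent of the form
$$\ro(A)\centerdot(\ga+\mu-\bfj)+\la\centerdot(\mu-\bfj)+\mu'\centerdot(\ga+\dt-\nu),$$
where the first two summands are independent of $\mu'$ (giving the prefactor of $a_\nu$) and the last summand packages the remaining $\mu'$-dependence into exactly the definition of $A(\ga+\dt-\nu,\la+\mu-\nu,r)$ once combined with $\bbl{\mu'\atop\la+\mu-\nu}\bbr$. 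Interchanging the order of summation to sum over $\nu$ first then yields the desired identity.

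The steps are all routine manipulations, and the main obstacle is not conceptual but combinatorial: tracking the several summation indices $\mu',\nu,\bfj,\eta$ and verifying that the powers of $v$ arising from the two applications of \ref{mul gauss binomial} combine correctly. In particular, the cancellation $-\bfj\centerdot\mu'+\mu'\centerdot\bfj$ (when $\eta=\nu-\bfj$ is substituted into the exponent $-\mu'\centerdot\eta$) is the point one must check carefully to match the clean exponent $\mu'\centerdot(\ga+\dt-\nu)$ on the right-hand side.
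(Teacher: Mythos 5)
Your proposal is correct and follows essentially the same route as the paper's proof: expand both factors, collapse via \eqref{[diag(la)][A]}, apply \ref{mul gauss binomial}(1) to split $\bbl{\ro(A)+\al\atop\mu}\bbr$, then \ref{mul gauss binomial}(2) to combine $\bbl{\al\atop\mu-\bfj}\bbr\bbl{\al\atop\la}\bbr$, and finally substitute $\nu=\bfj+\eta$. The only cosmetic issue is that you reuse the symbol $\nu$ first for the summation index coming from $0(\ga,\mu,r)$ (which \eqref{[diag(la)][A]} pins to $\ro(A)+\mu'$) and then for $\bfj+\eta$; renaming the former (as the paper does, using $\al$) avoids confusion but does not affect correctness.
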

\begin{proof}
According to \eqref{[diag(la)][A]} we have
\begin{equation*}
\begin{split}
0(\ga,\mu,r)A(\dt,\la,r)&=\sum_{\al\in\La(n,r-\sg(A))}v^{(\ro(A)+\al)
\centerdot\ga+\al\centerdot\dt}\leb{\ro(A)+\al\atop\mu}\rib
\leb{\al\atop\la}\rib[A+\diag(\al)].
\end{split}
\end{equation*}
Furthermore by \ref{mul gauss binomial} we have
\begin{equation*}
\begin{split}
\leb{\ro(A)+\al\atop\mu}\rib
\leb{\al\atop\la}\rib&=
\sum_{\bfj\in\mbnn,\,
\bfj\leq\mu}v^{\ro(A)\centerdot(\mu-\bfj)-\al\centerdot\bfj}
\leb{\ro(A)\atop\bfj}\rib
\leb{\al\atop\mu-\bfj}\rib\leb{\al\atop\la}\rib\\
&=\sum_{\bfj,\bt\in\mbnn,\,
\bfj\leq\mu\atop\bt\leq\la,\,\bt\leq\mu-\bfj}
v^{(\ro(A)+\la)\centerdot(\mu-\bfj)-\al\centerdot(\bfj+\bt)}
\leb{\ro(A)\atop\bfj}\rib
\leb{\al\atop\la+\mu-\bfj-\bt}\rib
\\&\qquad\qquad\times\leb{\la+\mu-\bfj-\bt\atop
\bt,\,\la-\bt,\,\mu-\bfj-\bt}\rib.
\end{split}
\end{equation*}
Thus we conclude that
\begin{equation*}
\begin{split}
0(\ga,\mu,r)A(\dt,\la,r)&=\sum_{\bfj,\bt\in\mbnn,\,
\bfj\leq\mu\atop\bt\leq\la,\,\bt\leq\mu-\bfj}
v^{\ro(A)\centerdot(\ga+\mu-\bfj)+\la\centerdot(\mu-\bfj)}
\leb{\ro(A)\atop\bfj}\rib
\leb{\la+\mu-\bfj-\bt\atop
\bt,\,\la-\bt,\,\mu-\bfj-\bt}\rib\\
&\qquad\qquad\qquad\times A(\ga+\dt-\bfj-\bt,\la+\mu-\bfj-\bt,r)\\
&=\sum_{\nu\in\mbnn,\,\nu\leq\mu}a_\nu A(\ga+\dt-\nu,\la+\mu-\nu,r).
\end{split}
\end{equation*}
The assertion follows.
\end{proof}

For simplicity, we set
$A(\dt,\la,r)= 0$ and $A(\dt,\la)=0$ if $a_{i,j}< 0$ for some $i\not=j$  for $A\in M_n(\mbz)$.

\begin{Lem}\label{formula 2}
Let $A\in\Thnpm$, $\dt\in\mbzn$, $\la\in\mbnn$, $m\in\mbn$ and $1\leq h\leq n-1$.

$(1)$ For $\bft\in\La(n,m)$, $0\leq j\leq\la_h$, $0\leq k\leq\la_{h+1}$, and $0\leq c\leq\min
\{t_h,j\}$, we set
\begin{equation*}
\begin{split}
\al^\bft_{j,c,k}&=
\bigg(\sum_{h>u}t_u+\la_h-j-c\bigg)\bse_h+\bigg(\la_{h+1}-k-\sum_{h+1>u}t_u
\bigg)
\bse_{h+1},\\
\bt^\bft_{j,c,k}&=(t_h+j-c-\la_h)\bse_h+(k-\la_{h+1})\bse_{h+1}
\end{split}
\end{equation*}
and $$f^\bft_{j,c,k}=v^{g^\bft_{j,k}}\prod_{u\not=h}\ol{\dleb{a_{h,u}+t_u\atop t_u}\drib}
\leb{-t_h\atop\la_h-j}\rib\leb{t_h+j-c\atop c,\,t_h-c,\,j-c}\rib
\leb{t_{h+1}\atop\la_{h+1}-k}\rib$$
where $g^\bft_{j,k}=\sum_{j\geq u,\,j\not=h}a_{h,j}t_u-\sum_{j>u,\,
j\not=h+1}a_{h+1,j}t_u+\sum_{u'\not=h,h+1,\,u<u'}t_ut_{u'}-t_h\dt_h
+t_{h+1}\dt_{h+1}+2jt_h-kt_{h+1}$.
Then we have
\begin{equation*}
\begin{split}
&\qquad(mE_{h,h+1})(\bfl)A(\dt,\la)\\
&=\sum_{\tiny{\begin{array}{c}\bft\in\La(n,m)\\
0\leq j\leq\la_h,\,0\leq k\leq\la_{h+1}\\
0\leq c\leq\min
\{t_h,j\}\end{array}}}
f^\bft_{j,c,k}
\bigg(A+\sum_{u\not=h}t_uE_{h,u}-\sum_{u\not=h+1}t_uE_{h+1,u}\bigg)
(\dt+\al^\bft_{j,c,k},\la+\bt^\bft_{j,c,k}).
\end{split}
\end{equation*}

$(2)$ For $\bft\in\La(n,m)$, $0\leq j\leq\la_{h+1}$, $0\leq k\leq\la_{h}$, and $0\leq c\leq\min
\{t_{h+1},j\}$, we set

\begin{equation*}
\begin{split}
\ti\al^\bft_{j,c,k}&=
\bigg(\sum_{h+1<u}t_u+\la_{h+1}-j-c\bigg)
\bse_{h+1}+\bigg(\la_{h}-k-\sum_{h<u}t_u
\bigg)
\bse_{h},\\
\ti\bt^\bft_{j,c,k}&=(t_{h+1}+j-c-\la_{h+1})
\bse_{h+1}+(k-\la_{h})\bse_{h}
\end{split}
\end{equation*}
and
$$\ti f^\bft_{j,c,k}=v^{\ti g^\bft_{j,k}}
\prod_{u\not=h+1}\ol{\dleb{a_{h+1,u}+t_u\atop t_u}\drib}
\leb{-t_{h+1}\atop\la_{h+1}-j}\rib\leb{t_{h+1}+j-c\atop c,\,t_{h+1}-c,\,j-c}\rib
\leb{t_{h}\atop\la_{h}-k}\rib$$
where $\ti g^\bft_{j,k}=\sum_{j\leq u,\,j\not=h+1}a_{h+1,j}t_u-\sum_{j<u,\,
j\not=h}a_{h,j}t_u+\sum_{u\not=h,h+1,\,u<u'}t_ut_{u'}+t_h\dt_h
-t_{h+1}\dt_{h+1}+2jt_{h+1}-kt_{h}$.
Then we have
\begin{equation*}
\begin{split}
&\qquad(mE_{h+1,h})(\bfl)A(\dt,\la)\\
&=\sum_{\tiny{\begin{array}{c}\bft\in\La(n,m)\\
 0\leq j\leq\la_{h+1},\,0\leq k\leq\la_{h}\\
0\leq c\leq\min
\{t_{h+1},j\}\end{array}}}
\ti f^\bft_{j,c,k}
\bigg(A-\sum_{u\not=h}t_uE_{h,u}+\sum_{u\not=h+1}t_uE_{h+1,u}\bigg)
(\dt+\ti\al^\bft_{j,c,k},\la+\ti\bt^\bft_{j,c,k}).
\end{split}
\end{equation*}
\end{Lem}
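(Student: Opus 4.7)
The plan is to prove (1) by expanding both factors in $(mE_{h,h+1})(\bfl, r)\,A(\dt, \la, r)$ via their defining basis decompositions, applying \ref{BLM formulas}(1) to each resulting product of basis elements, and then reorganizing the coefficients via the identities in \ref{mul gauss binomial}. Part (2) will follow by the analogous argument using \ref{BLM formulas}(2), with the roles of $h$ and $h+1$ interchanged throughout.

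Concretely, I would begin from
\begin{equation*}
(mE_{h,h+1})(\bfl, r) = \sum_{\mu\in\La(n,r-m)}[mE_{h,h+1}+\diag(\mu)], \qquad A(\dt,\la,r) = \sum_\sg v^{\sg\centerdot\dt}\leb{\sg\atop\la}\rib[A+\diag(\sg)].
\end{equation*}
By \eqref{[diag(la)][A]}, $[mE_{h,h+1}+\diag(\mu)][A+\diag(\sg)]$ is nonzero only when $\mu = \ro(A)+\sg - m\bfe_{h+1}$, which collapses the $\mu$-sum. Applying \ref{BLM formulas}(1) to the surviving products then yields a sum over $\bft\in\La(n,m)$ whose factors are $v^{\bt(\bft, A+\diag(\sg))}$, the Gaussian binomials $\ol{\dleb{a_{h,u}+t_u\atop t_u}\drib}$ for $u\neq h$, the $\sg_h$-dependent factor $\ol{\dleb{\sg_h+t_h\atop t_h}\drib}$, and the basis element $[A'+\diag(\sg+t_h\bfe_h - t_{h+1}\bfe_{h+1})]$, where $A' := A+\sum_{u\neq h}t_u E_{h,u}-\sum_{u\neq h+1}t_u E_{h+1,u}$.

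Next I would reindex $\rho := \sg + t_h\bfe_h - t_{h+1}\bfe_{h+1}$ and isolate the $\rho_h$- and $\rho_{h+1}$-dependent coefficients so that the residual $\rho$-sum reassembles into $A'(\dt + \al^\bft_{j,c,k},\, \la + \bt^\bft_{j,c,k}, r)$. At position $h$ this requires two successive applications of \ref{mul gauss binomial}: first (1) to $\leb{\rho_h-t_h\atop\la_h}\rib$ (taking $\al=\rho_h$, $\bt=-t_h$) which introduces the index $j$ together with the factor $\leb{-t_h\atop\la_h-j}\rib$; then, after converting $\ol{\dleb{\rho_h\atop t_h}\drib}$ to a standard Gaussian binomial via $\dleb{N\atop t}\drib = v^{t(N-t)}\leb{N\atop t}\rib$, an application of (2) to $\leb{\rho_h\atop j}\rib\leb{\rho_h\atop t_h}\rib$ introduces $c$ with $0\leq c\leq\min\{t_h,j\}$, the multinomial factor $\leb{t_h+j-c\atop c,\,t_h-c,\,j-c}\rib$, and a new Gaussian binomial in $\rho_h$ with top $t_h+j-c$ --- which is precisely the $h$-th entry of $\la+\bt^\bft_{j,c,k}$. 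At position $h+1$, a single application of \ref{mul gauss binomial}(1) to $\leb{\rho_{h+1}+t_{h+1}\atop\la_{h+1}}\rib$ introduces $k$ together with the factor $\leb{t_{h+1}\atop\la_{h+1}-k}\rib$. For $u\notin\{h,h+1\}$, the $\rho_u$-dependence is already in the required shape.

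The main technical obstacle will be the careful assembly of the exponent $g^\bft_{j,k}$, which emerges as the sum of (i) the $\sg$-free part of $\bt(\bft, A+\diag(\sg))$ after the substitution $\sg_h \mapsto \rho_h - t_h$, $\sg_{h+1}\mapsto\rho_{h+1}+t_{h+1}$ (the cross terms produced in this step must be shown to cancel against contributions from later manipulations, so that only $\sum_{u'\neq h,h+1,\,u<u'}t_u t_{u'}$ survives in the triangular part), (ii) the $v$-shift $v^{t_h^2}$ produced by the conversion $\ol{\dleb{\rho_h\atop t_h}\drib} = v^{-t_h(\rho_h-t_h)}\leb{\rho_h\atop t_h}\rib$, and (iii) the $v$-powers $v^{jt_h}$ (twice) and $v^{-kt_{h+1}}$ generated by \ref{mul gauss binomial}. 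Once this bookkeeping is checked, the $\rho$-independent factors combine to give exactly $f^\bft_{j,c,k}$ and the $\rho$-sum aggregates to $A'(\dt+\al^\bft_{j,c,k},\la+\bt^\bft_{j,c,k},r)$, establishing (1). The proof of (2) is structurally identical.
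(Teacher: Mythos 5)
Your proposal is correct and follows essentially the same route as the paper's own proof: expand $A(\dt,\la,r)$ into the $[\,\cdot\,]$-basis, use the row/column-sum constraint to collapse the diagonal sum from $(mE_{h,h+1})(\bfl,r)$, apply Proposition~\ref{BLM formulas}(1), substitute $\nu=\al+t_h\bse_h-t_{h+1}\bse_{h+1}$ (your $\rho$), and then use Corollary~\ref{mul gauss binomial}(1) at position $h$ to create $j$, \ref{mul gauss binomial}(2) to create $c$, and \ref{mul gauss binomial}(1) at position $h+1$ to create $k$. Your bookkeeping of the exponent is also correct: the $v^{t_h^2}$ from the conversion of $\ol{\dleb{\rho_h\atop t_h}\drib}$, together with the $\rho$-free part of $\bt(\bft,A+\diag(\al))$, does indeed reduce the quadratic term to $\sum_{u'\neq h,h+1,\,u<u'}t_ut_{u'}$, and the $-t_h\dt_h+t_{h+1}\dt_{h+1}$ comes from the $\rho$-free part of $\al\centerdot\dt$, exactly as in the paper.
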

\begin{proof}
For simplicity, for $A\in M_n(\mbz)$  with $\sg(A)=r$, we set $[A] = 0\in\bfSr$ if $a_{i,j}<0$  for some $i,j$.
According to \ref{BLM formulas} we have
\begin{equation*}
\begin{split}
&\qquad(mE_{h,h+1})(\bfl,r)A(\dt,\la,r)\\
&=\sum_{\al\in\La(n,r-\sg(A))}
v^{\al\centerdot\dt}\leb{\al\atop\la}\rib[mE_{h,h+1}+\diag(\ro(A)+\al
-m\bse_{h+1})]\cdot[A+\diag(\al)]\\
&=\sum_{\al\in\La(n,r-\sg(A))\atop\bft\in\La(n,m)}
v^{\bt(\bft,A+\diag(\al))+\al\centerdot\dt-t_h\al_h}
\prod_{u\not=h}\ol{\dleb{a_{h,u}+t_u\atop t_u}\drib}
\leb{\al_{h}+t_h\atop t_h}\rib\leb{\al\atop\la}\rib\\
&\qquad\qquad\times\bigg[A+\sum_{u\not=h}t_uE_{h,u}
-\sum_{u\not=h+1}t_uE_{h+1,u}+
\diag(\al+t_h\bse_h-t_{h+1}\bse_{h+1})\bigg].
\end{split}
\end{equation*}
Let $\nu=\al+t_h\bse_h-t_{h+1}\bse_{h+1}$. By \ref{mul gauss binomial} we have
\begin{equation*}
\begin{split}
\leb{\nu_h-t_h\atop\la_h}\rib
\leb{\nu_h\atop t_h}\rib&=\sum_{0\leq j\leq\la_h}v^{\nu_h(\la_h-j)+jt_h}\leb{-t_h\atop\la_h-j}\rib
\bigg(\leb{\nu_h\atop t_h}\rib\leb{\nu_h\atop j}\rib\bigg)\\
&=\sum_{0\leq j\leq\la_h\atop 0\leq c\leq\min\{t_h,j\}}
v^{\nu_h(\la_h-j-c)+2jt_h}\leb{-t_h\atop\la_h-j}\rib\leb
{\nu_h\atop t_h+j-c}\rib\leb{t_h+j-c\atop c,\,t_h-c,\,j-c}\rib
\end{split}
\end{equation*}
and
$\bbl{\nu_{h+1}+t_{h+1}\atop\la_{h+1}}\bbr
=\sum_{0\leq k\leq\la_{h+1}}v^{\nu_{h+1}(\la_{h+1}-k)-kt_{h+1}}
\bbl{\nu_{h+1}\atop k}\bbr\bbl{t_{h+1}\atop\la_{h+1}-k}\bbr.$
This implies that
\begin{equation*}
\begin{split}
\leb{\al_h+t_h\atop t_h}\rib\leb{\al\atop\la}\rib&=
\prod_{s\not=h,h+1}\leb{\nu_s\atop\la_s}\rib\sum_{0\leq k\leq\la_{h+1},\,0\leq j\leq\la_h\atop 0\leq c\leq\min\{t_h,j\}}
v^{x^{\nu,\bft}_{j,c,k}}\leb
{-t_h\atop\la_h-j}\rib\leb{t_h+j-c\atop c,\,t_h-c,\,j-c}\rib\\
&\qquad\qquad\times
\leb{t_{h+1}\atop\la_{h+1}-k}\rib\leb{\nu_h\atop t_h+j-c}\rib
\leb{\nu_{h+1}\atop k}\rib.
\end{split}
\end{equation*}
where $x^{\nu,\bft}_{j,c,k}
=\nu_h(\la_h-j-c)+\nu_{h+1}(\la_{h+1}-k)+2jt_h-kt_{h+1}$.
Thus
\begin{equation*}
\begin{split}
&\qquad(mE_{h,h+1})(\bfl,r)A(\dt,\la,r)\\
&=\sum_{\tiny{\begin{array}{c}\bft\in\La(n,m)\\
0\leq j\leq\la_h,\,0\leq k\leq\la_{h+1}\\
0\leq c\leq\min
\{t_h,j\}\end{array}}}\prod_{u\not=h}\ol{\dleb{a_{h,u}+t_u
\atop t_u}\drib}
\leb{-t_h\atop\la_h-j}\rib\leb{t_h+j-c\atop c,\,t_h-c,\,j-c}\rib
\leb{t_{h+1}\atop\la_{h+1}-k}\rib\\
&\qquad\qquad\times \sum_{\nu\in\La(n,r-\sg(A)+t_h-t_{h+1})}v^{y^\nu_{j,c,k}}
\prod_{s\not=h,h+1}\leb{\nu_s\atop\la_s}\rib \leb{\nu_h\atop
t_h+j-c}\rib\leb{\nu_{h+1}\atop k}\rib\\
&\qquad\qquad\qquad\times
\bigg[A+\sum_{u\not=h}t_uE_{h,u}
-\sum_{u\not=h+1}t_uE_{h+1,u}+
\diag(\nu)\bigg]\\
&=\sum_{\tiny{\begin{array}{c}\bft\in\La(n,m)\\
0\leq j\leq\la_h,\,0\leq k\leq\la_{h+1}\\
0\leq c\leq\min
\{t_h,j\}\end{array}}}
f^\bft_{j,c,k}
\bigg(A+\sum_{u\not=h}t_uE_{h,u}-\sum_{u\not=h+1}t_uE_{h+1,u}\bigg)
(\dt+\al^\bft_{j,c,k},\la+\bt^\bft_{j,c,k}).
\end{split}
\end{equation*}
where
$y^{\nu,\bft}_{j,c,k}=\bt(\bft,A+\diag(\al))+\al\centerdot\dt-t_h\al_h
+x^{\nu,\bft}_{j,c,k}=g^\bft_{j,k}+\nu\centerdot(\dt+\al^\bft_{j,c,k})$.
The assertion (1) follows. The assertion (2) can be proved in a way  similar to the proof of (1).
\end{proof}

\section{Realization of $\Un$ and $\barUnk$}

We shall denote by $\Vn$  the $\sZ$-submodule of $\prod_{r\geq 0}\bfSr$ spanned by
$\{A(\dt,\la)\mid A\in\Thnpm,\,\dt\in\mbzn,\,\la\in\mbnn\}.$
Let $\Vnz$ be the $\sZ$-subalgebra of $\prod_{r\geq 0}\bfSr$ generated by $0(\pm\bse_i)$ and $0(0,t\bse_i)$ for $1\leq i\leq n$ and $t\in\mbn$,
where
$\bse_i=(0,\cdots,0,\underset i1,0\cdots,0)\in\mbn^n.$
\begin{Lem}\label{basis of zero part}
The set $\{0(\dt,\la)\mid
\dt,\la\in\mbnn,\,\dt_i\in\{0,1\},\forall i\}$
forms a $\sZ$-basis for $\Vnz$.
\end{Lem}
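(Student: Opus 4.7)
The plan is to identify $\Vnz$ with the image $\zeta(\Un^0)$ of Lusztig's zero part $\Un^0\subset\Un$ under the joint homomorphism $\zeta=\prod_{r\geq 0}\zeta_r$, use the known $\sZ$-basis of $\Un^0$ to deduce spanning, and then prove linear independence by a separate Laurent polynomial argument. The linear independence is the main obstacle; spanning is essentially bookkeeping.

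For spanning, each $\zeta_r$ satisfies $\zeta_r(K_i^{\pm 1})=0(\pm\bse_i,r)$ by definition. Applying $\zeta_r$ to the defining product $\bbl{K_i;0\atop t}\bbr=\prod_{s=1}^t\frac{K_iv^{-s+1}-K_i^{-1}v^{s-1}}{v^s-v^{-s}}$ inside the commutative diagonal subalgebra of $\bfSr$ (using $\frac{v^a-v^{-a}}{v^b-v^{-b}}=\frac{[a]}{[b]}$) yields $\zeta_r(\bbl{K_i;0\atop t}\bbr)=0(0,t\bse_i,r)$. Hence $\zeta(\Un^0)=\Vnz$, and multiplying termwise gives $\zeta(K^\dt\bbl{K;0\atop\la}\bbr)=0(\dt,\la)$. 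By \ref{basis for Un} with $A=0$, the elements $K^\dt\bbl{K;0\atop\la}\bbr$ with $\dt\in\{0,1\}^n$ and $\la\in\mbnn$ form a $\sZ$-basis of $\Un^0$, so their images $\sZ$-span $\Vnz$.

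For linear independence, suppose $\sum c_{\dt,\la}0(\dt,\la)=0$. Extracting the coefficient of $[\diag(\mu)]$ in the component indexed by $r=\sg(\mu)$ yields
\[
\sum_{\dt,\la} c_{\dt,\la}\,v^{\dt\centerdot\mu}\bbl{\mu\atop\la}\bbr = 0 \quad\text{for every }\mu\in\mbnn.
\]
Substituting $X_i=v^{\mu_i}$, each $\bbl{\mu_i\atop\la_i}\bbr$ is the value at $(v^{\mu_1},\ldots,v^{\mu_n})$ of the Laurent polynomial
\[
f_{\la_i}(X_i) = \prod_{k=0}^{\la_i-1}\frac{v^{-k}X_i-v^k X_i^{-1}}{v^{k+1}-v^{-k-1}} \in \mbq(v)[X_i^{\pm 1}],
\]
whose $X_i$-support is $\{-\la_i,-\la_i+2,\ldots,\la_i\}$ with nonzero extremal coefficients. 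Since $\{v^\mu\mid\mu\in\mbn\}$ is infinite in $\mbq(v)$, vanishing at every $\mu\in\mbnn$ forces $\sum c_{\dt,\la}\prod_i X_i^{\dt_i}f_{\la_i}(X_i)=0$ identically in $\mbq(v)[X_1^{\pm 1},\ldots,X_n^{\pm 1}]$.

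Via the tensor product decomposition $\mbq(v)[X_1^{\pm 1},\ldots,X_n^{\pm 1}]\cong\bigotimes_i\mbq(v)[X_i^{\pm 1}]$ and the pure-tensor form of the summands, this reduces to showing $\{X^\dt f_\la(X)\mid\dt\in\{0,1\},\la\in\mbn\}$ is $\mbq(v)$-linearly independent in $\mbq(v)[X^{\pm 1}]$. For this I would use descending induction on $L$: among pairs with $\la\leq L$, the extreme power $X^{L+1}$ appears only in $X^1f_L(X)$ and $X^{-L}$ only in $X^0f_L(X)$, each with nonzero coefficient; reading off these two extremal coefficients forces $c_{1,L}=c_{0,L}=0$, and the induction proceeds to $L-1$. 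The hard part is precisely this extremal decoupling — since the two parameters $\dt$ and $\la$ couple through $\la+\dt$, one must exploit both ends of each $f_\la(X)$ simultaneously in order to separate the two values of $\dt$.
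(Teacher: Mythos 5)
Your proof is correct, and on the linear independence half it takes a genuinely different route from the paper. The paper's own argument is a transfer-of-structure argument: it first asserts (quoting the BLM realization) that $\{0(\bfj)\mid\bfj\in\mbzn\}$ is a $\mbq(v)$-basis for the torus algebra $\bfVnz$, so that $K^\bfj\mapsto 0(\bfj)$ is a $\mbq(v)$-algebra isomorphism $\bfUnz\cong\bfVnz$; this isomorphism matches the $\sZ$-generators $K_i^{\pm1}$, $\bbl{K_i;0\atop t}\bbr$ with $0(\pm\bse_i)$, $0(0,t\bse_i)$, hence restricts to $\Un^0\cong\Vnz$, and the claimed basis then follows in one line from Lusztig's $\sZ$-basis of $\Un^0$ (\cite[4.5]{Lu90}). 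You instead prove the $\mbq(v)$-linear independence of $\{0(\dt,\la)\}$ from scratch, by extracting the coefficient of $[\diag(\mu)]$, interpreting $v^{\mu\centerdot\dt}\bbl{\mu\atop\la}\bbr$ as the evaluation at $X_i=v^{\mu_i}$ of a Laurent polynomial $X^\dt\prod_i f_{\la_i}(X_i)$, and then decoupling $\dt$ from $\la$ by reading off the two extremal exponents $\pm$ at the top parameter $L$ in a descending induction. Both approaches use Lusztig's $\sZ$-basis of $\Un^0$ for the spanning/indexing step; the difference is only in where linear independence comes from. The paper's route is shorter because it leans on a result from BLM; yours is longer but self-contained (it does not invoke BLM's injectivity), and the extremal-degree bookkeeping that separates $\dt\in\{0,1\}$ from $\la$ — the point you correctly flag as the delicate part — is handled cleanly by using both ends of the support of $f_\la$ simultaneously.

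One small housekeeping remark: the step ``vanishing at every $\mu\in\mbnn$ forces the Laurent polynomial to vanish identically'' deserves a sentence (e.g., induct on $n$, reducing to the one-variable case where $v^\mu$, $\mu\in\mbn$, are infinitely many distinct elements of $\mbq(v)$), and the reduction to one variable via the tensor decomposition $\mbq(v)[X_1^{\pm1},\dots,X_n^{\pm1}]\cong\bigotimes_i\mbq(v)[X_i^{\pm1}]$ should note that the summands are pure tensors, so linear independence in each factor suffices. These are standard, and your sketch makes clear you know them, but in a written-up version they should appear.
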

\begin{proof}
Let $\bfVnz$ be the $\mbq(v)$-subalgebra of $\prod_{r\geq 0}\bfSr$ generated by $0(\pm\bse_i)$ for $1\leq i\leq n$. Since the set $\{0(\bfj)\mid\bfj\in\mbzn\}$ forms a $\mbq(v)$-basis for $\bfVnz$ we conclude that
$\bfVnz$ is isomorphic to $\bfUnz$, where $\bfUnz$ is the $\mbq(v)$-subalgebra of $\bfUn$ generated by $K_i^{\pm 1}$ for $1\leq i\leq n$.
Now the assertion follows from \cite[4.5]{Lu90}.
\end{proof}
We now describe several $\sZ$-bases for $\Vn$ as follows.
\begin{Lem}\label{basis for Vn}
Each of the following set forms a $\sZ$-basis for $\Vn:$

$(1)$ $\frak B_1=\{0(\dt,\la)A(\bfl)\mid A\in\Thnpm,\,
\dt,\la\in\mbnn,\,\dt_i\in\{0,1\},\forall i\};$

$(2)$ $\frak B_2=\{A(\bfl)0(\dt,\la)\mid A\in\Thnpm,\,
\dt,\la\in\mbnn,\,\dt_i\in\{0,1\},\forall i\};$

$(3)$ $\frak B_3=\{A(\dt,\la)\mid A\in\Thnpm,\,
\dt,\la\in\mbnn,\,\dt_i\in\{0,1\},\forall i\}.$
\end{Lem}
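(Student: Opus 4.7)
My plan is to treat $\frak B_3$ first by a direct coefficient analysis in $\prod_{r\geq 0}\bfSr$, and then to derive $\frak B_1$ (and by symmetry $\frak B_2$) using a parallel argument built on a key specialization of \ref{formula 1}. Throughout, I would work one $r$ at a time, using the unique decomposition of each $B\in\Thnr$ as $A+\diag(\mu)$ with $A\in\Thnpm$ and $\mu\in\mbnn$, so that different $A$'s contribute to disjoint subsets of the basis $\{[B]\}_{B\in\Thnr}$.

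For $\frak B_3$, the $[A+\diag(\mu)]$-coefficient of $A(\dt,\la,r)$ is the function $v^{\mu\centerdot\dt}\bbl{\mu\atop\la}\bbr$. A vanishing combination $\sum c_{A,\dt,\la}A(\dt,\la)=0$ thus decouples over $A$ into identities $\sum_{\dt,\la}c_{A,\dt,\la}v^{\mu\centerdot\dt}\bbl{\mu\atop\la}\bbr=0$ for all $\mu\in\mbnn$. The $\sZ$-linear independence of these coefficient functions for $\dt_i\in\{0,1\}$ is precisely \ref{basis of zero part} applied to $\Vnz$, so the $c$'s vanish. For spanning, I would apply the same lemma in reverse: expand $0(\dt,\la)$ with $\dt\in\mbzn$ unrestricted in its basis, read off the resulting identity $v^{\mu\centerdot\dt}\bbl{\mu\atop\la}\bbr=\sum b_{\dt',\la'}v^{\mu\centerdot\dt'}\bbl{\mu\atop\la'}\bbr$ ($\dt'_i\in\{0,1\}$), multiply by $[A+\diag(\mu)]$ and sum over $\mu$ to conclude $A(\dt,\la)=\sum b_{\dt',\la'}A(\dt',\la')$.

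For $\frak B_1$, the key computation is to apply \ref{formula 1} to $0(\dt,\la)\cdot A(\bfl)$ with the formula's second pair set to $(\bfl,\bfl)$. The constraint $\nu-\bfl\leq\bfj\leq\nu$ then forces $\bfj=\nu$ and the multinomial collapses to $1$, yielding
\begin{equation*}
0(\dt,\la)A(\bfl)=\sum_{\nu\leq\la}v^{\ro(A)\centerdot(\dt+\la-\nu)}\bbl{\ro(A)\atop\nu}\bbr A(\dt-\nu,\la-\nu).
\end{equation*}
Taking the $[A+\diag(\mu)]$-coefficient and using \ref{mul gauss binomial}(1) with $\al=\ro(A),\bt=\mu$ to collapse the inner sum to $\bbl{\ro(A)+\mu\atop\la}\bbr$, I obtain the clean identity
\begin{equation*}
\text{coeff}_{[A+\diag(\mu)]}\bigl(0(\dt,\la)A(\bfl)\bigr)=v^{\rho\centerdot\dt}\bbl{\rho\atop\la}\bbr,\qquad\rho:=\ro(A)+\mu.
\end{equation*}
This is structurally identical to the $\frak B_3$ coefficient but evaluated on the shifted cone $\rho\geq\ro(A)$. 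Since each coefficient function is a Laurent polynomial in the $v^{\rho_i}$, vanishing on this cone propagates to all of $\mbnn$, so linear independence of $\frak B_1$ follows just as for $\frak B_3$. For spanning, I would use $\mu=\rho-\ro(A)$, expand $\bbl{\rho-\ro(A)\atop\la}\bbr$ via \ref{mul gauss binomial}(1) together with the identity $\bbl{-n\atop k}\bbr=(-1)^k\bbl{n+k-1\atop k}\bbr\in\sZ$ as a $\sZ$-combination of $\bbl{\rho\atop\xi}\bbr$ with $\xi\leq\la$, and then invoke \ref{basis of zero part} to reduce the resulting exponents $v^{\rho\centerdot(\dt+\la-\xi)}$ to $\dt'_i\in\{0,1\}$.

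Finally, $\frak B_2$ is handled analogously using a right-multiplication formula $A(\bfl)\cdot 0(\dt,\la)$, obtained either by a parallel computation or by applying the anti-involution of $\bfSr$ sending $[A]\mapsto[{}^tA]$ to \ref{formula 1}. The hardest step will be the spanning of $\frak B_1$: one must carefully carry out the shift from $\mu$-variables to $\rho$-variables while preserving $\sZ$-integrality of the expansion coefficients, since a naive induction on $|\la|$ fails (the $\frak B_3$-expansion of $A(\dt-\nu,\la-\nu)$ can introduce terms with $|\la'|>|\la-\nu|$). Once the reduction to $\rho$-variables via \ref{mul gauss binomial}(1) and the negative-index identity is made, however, both assertions reduce cleanly to \ref{basis of zero part}.
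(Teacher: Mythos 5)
Your overall strategy is sound, and you've correctly identified both the key specialization of Lemma~\ref{formula 1} (giving $0(\dt,\la)A(\bfl)=\sum_{\nu\leq\la}v^{\ro(A)\centerdot(\dt+\la-\nu)}\leb{\ro(A)\atop\nu}\rib A(\dt-\nu,\la-\nu)$) and the clean coefficient identity $\mathrm{coeff}_{[A+\diag(\mu)]}\bigl(0(\dt,\la)A(\bfl)\bigr)=v^{\rho\centerdot\dt}\leb{\rho\atop\la}\rib$ with $\rho=\ro(A)+\mu$. The paper proves $\frak B_1$ first and then $\frak B_3$, rather than the other way around, but that reordering is inessential. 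The genuine difference lies in how spanning and independence of $\frak B_1$ are obtained. For spanning, the paper reads the displayed formula as unitriangular in $\la$ (the leading term $v^{\ro(A)\centerdot(\dt+\la)}A(\dt,\la)$ has a unit coefficient, and the remaining terms involve $\la-\nu$ with $\nu>\bfl$), and inducts on $\sg(\la)$ to conclude $\Vn=\spann_\sZ\{0(\dt,\la)A(\bfl)\mid\dt\in\mbzn,\la\in\mbnn\}$ \emph{with unrestricted $\dt$}; the restriction $\dt_i\in\{0,1\}$ is then imposed in a single final step by Lemma~\ref{basis of zero part}. For independence, the paper appeals to the already-known $\mbq(v)$-linear independence of $\{0(\bfj)A(\bfl)\}$ (from BLM) together with Lemma~\ref{basis of zero part}. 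Your route --- the shifted-cone Laurent polynomial density argument for independence, and the expansion of $\leb{\rho-\ro(A)\atop\la}\rib$ via Corollary~\ref{mul gauss binomial}(1) and $\leb{-n\atop k}\rib=(-1)^k\leb{n+k-1\atop k}\rib$ for spanning --- is a valid alternative; it is self-contained and avoids quoting BLM's independence result, at the cost of being more computational. What the paper's approach buys is brevity and no need to worry about $\sZ$-integrality of shifted binomials.

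One point you should correct: the worry that ``a naive induction on $|\la|$ fails'' is not applicable to the paper's argument. The difficulty you describe only arises if one tries to interleave the $\la$-reduction with the reduction of $\dt$ to $\{0,1\}^n$, since the latter (via the identity $A(\dt,\la)=v^{\la_i}(v^{\la_i+1}-v^{-\la_i-1})A(\dt-\bse_i,\la+\bse_i)+v^{2\la_i}A(\dt-2\bse_i,\la)$) can increase $\la$. The paper simply does not interleave: the induction on $\sg(\la)$ produces an expansion over $0(\dt',\la')A(\bfl)$ with $\dt'$ ranging over all of $\mbzn$, and only after that is Lemma~\ref{basis of zero part} applied --- wholly inside the zero part, before multiplying by $A(\bfl)$ --- to rewrite each $0(\dt',\la')$ as a $\sZ$-combination of $0(\dt'',\la'')$ with $\dt''_i\in\{0,1\}$. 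So the straightforward induction is perfectly sound, and your workaround, while correct, is solving a problem that the simpler ordering of steps already avoids.
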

\begin{proof}
According to \ref{formula 1} we have
$$0(\dt,\la)A(\bfl)=v^{\ro(A)\centerdot(\dt+\la)}A(\dt,\la)+
\sum_{\bfj\in\mbnn,\,\bfl<\bfj\leq\la}
v^{\ro(A)\centerdot(\dt+\la-\bfj)}\leb{\ro(A)\atop\bfj}\rib
A(\dt-\bfj,\la-\bfj).$$
It follows that $\Vn$ is spanned by
$\{0(\dt,\la)A(\bfl)\mid A\in\Thnpm,\,
\dt\in\mbzn,\,\la\in\mbnn\}$.
Thus by \ref{basis of zero part} we have
$\Vn=\spann_\sZ\fB_1$.
Since the set $\{0(\bfj)A(\bfl)\mid A\in\Thnpm,\,\bfj\in\mbzn\}$ is linearly independent, by \ref{basis of zero part} we conclude that
the set $\fB_1$ is linearly independent. Hence the set $\fB_1$ forms a $\sZ$-basis for $\Vn$. Similarly, the set $\fB_2$ forms a $\sZ$-basis for $\Vn$. It remains to prove that the set
$\fB_3$ forms a $\sZ$-basis for $\Vn$. For $\la\in\mbnn$ and $\mu,\dt\in\mbzn$
we have
$$v^{\dt_i\mu_i}\leb{\mu_i\atop\la_i}\rib
=v^{\la_i}(v^{\la_i+1}-v^{-\la_i-1})v^{(\dt_i-1)\mu_i}\leb{\mu_i\atop
\la_i+1}\rib+v^{2\la_i+(\dt_i-2)\mu_i}\leb{\mu_i\atop\la_i}\rib.$$
It follows that
\begin{equation*}
\begin{split}
A(\dt,\la)&=v^{\la_i}(v^{\la_i+1}-v^{-\la_i-1})A(\dt-\bse_i,\la+\bse_i)
+v^{2\la_i}A(\dt-2\bse_i,\la)\\
&=-v^{-\la_i}(v^{\la_i+1}-v^{-\la_i-1})A(\dt+\bse_i,\la+\bse_i)
+v^{-2\la_i}A(\dt+2\bse_i,\la)
\end{split}
\end{equation*}
for $1\leq i\leq n$, $\la\in\mbnn$ and $\dt\in\mbzn$.
This shows that
$\Vn$ is spanned by $\fB_3$. Assume $$\sum_{A\in\Thnpm,\,\la,\dt\in\mbnn\atop\dt_i\in\{0,1\},\forall i}f_{A,\dt,\la}A(\dt,\la)=0$$
where $f_{A,\dt,\la}\in\mbq(v)$.  Then
$$\sum_{A\in\Thnpm\atop\mu\in\La(n,r-\sg(A))}\bigg
(\sum_{\la,\dt\in\mbnn\atop\dt_i\in\{0,1\},\forall i}f_{A,\dt,\la}
v^{\mu\centerdot\dt}\leb{\mu\atop\la}\rib\bigg)[A+\diag(\mu)]
=\sum_{A\in\Thnpm,\,\la,\dt\in\mbnn\atop\dt_i\in\{0,1\},\forall i}f_{A,\dt,\la}A(\dt,\la,r)=0.$$
This implies that $$\sum_{\la,\dt\in\mbnn\atop\dt_i\in\{0,1\},\forall i}f_{A,\dt,\la}
v^{\mu\centerdot\dt}\leb{\mu\atop\la}\rib=0$$ for $A\in\Thnpm$ and $\mu\in\mbnn$.
It follows that
$$\sum_{\la,\dt\in\mbnn\atop\dt_i\in\{0,1\},\forall i}f_{A,\dt,\la}0(\dt,\la,r)=
0$$
for $r\geq 0$ and $A\in\Thnpm$. Thus by \ref{basis of zero part} we conclude that
$f_{A,\dt,\la}=0$ for all $A,\dt,\la$. This shows that
the set $\fB_3$ is linearly independent and hence the set $\fB_3$ forms
a $\sZ$-basis for $\Vn$.
\end{proof}

We now use \ref{formula 1} and \ref{formula 2} to prove that $\Vn$ is a $\sZ$-subalgebra of $\prod_{r\geq 0}\bfSr$.

\begin{Prop}\label{Z algebra}
$\Vn$ is a $\sZ$-subalgebra of $\prod_{r\geq 0}\bfSr$. Furthermore
the elements $(mE_{h,h+1})(\bfl)$, $(mE_{h+1,h})(\bfl)$ and
$0(\dt,\la)$ $($for $m\in\mbn$, $1\leq h\leq n-1$, $\dt\in\mbzn$ and $\la\in\mbnn)$
generate $\Vn$ as a $\sZ$-algebra.
\end{Prop}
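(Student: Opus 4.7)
The plan is to prove both assertions at once by establishing $\Vn=\sA$, where $\sA$ denotes the $\sZ$-subalgebra of $\prod_{r\geq 0}\bfSr$ generated by the elements listed in the proposition, via two inclusions.

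For $\sA\subseteq\Vn$, each generator is a basis element of $\Vn$ from the basis $\fB_3$ of \ref{basis for Vn} (take $A=mE_{h,h+1}\in\Thnpm$, $\dt=\la=\bfl$ for $(mE_{h,h+1})(\bfl)$, and $A=0$ for $0(\dt,\la)$). The multiplication formulas \ref{formula 1} and \ref{formula 2} say precisely that left multiplication by any generator sends every basis element $A(\dt,\la)$ to a $\sZ$-linear combination of elements of $\fB_3$. An easy induction on the length of a product of generators then yields $\sA\subseteq\Vn$.

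The substance of the proof is the reverse inclusion $\Vn\subseteq\sA$. I would argue by induction on $A\in\Thnpm$ along the partial order $\pr$, proving for each $A$ the statement \emph{$A(\dt,\la)\in\sA$ for all $\dt\in\mbzn$ with $\dt_i\in\{0,1\}$ and all $\la\in\mbnn$}; via basis $\fB_3$ this gives $\Vn\subseteq\sA$. The base case $A=0$ is immediate since $0(\dt,\la)$ is a generator. For the inductive step I first show $A(\bfl)\in\sA$. By \ref{tri},
\[
\prod_{1\leq i\leq h<j\leq n}(a_{i,j}E_{h,h+1})(\bfl)\cdot\prod_{1\leq j\leq h<i\leq n}(a_{i,j}E_{h+1,h})(\bfl)=A(\bfl)+f,
\]
where $f$ is a $\mbq(v)$-linear combination of $B(\bfj)$ with $B\p A$ and $\bfj\in\mbzn$. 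The left-hand side lies in $\sA$, hence in $\Vn$ by the first inclusion, and $A(\bfl)\in\Vn$, so $f\in\Vn$.

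The main obstacle is the coefficient gap: \ref{tri} provides only a $\mbq(v)$-expansion of $f$, whereas $\sA$ is closed only under $\sZ$-combinations. I handle this by rewriting each $B(\bfj)=B(\bfj,\bfl)$ into $\fB_3$ using the $\sZ$-identity
\[
A(\dt,\la)=v^{\la_i}(v^{\la_i+1}-v^{-\la_i-1})A(\dt-\bse_i,\la+\bse_i)+v^{2\la_i}A(\dt-2\bse_i,\la)
\]
(and its mirror for negative shifts) from the proof of \ref{basis for Vn}, which keeps the matrix $B$ fixed. The resulting $\mbq(v)$-expansion of $f$ is supported on $\{B(\dt',\la'):B\p A,\,\dt'_i\in\{0,1\}\}\subseteq\fB_3$, and since $f\in\Vn$, the uniqueness of $\sZ$-expansion in $\fB_3$ forces all these coefficients into $\sZ$. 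The primary inductive hypothesis supplies $B(\dt',\la')\in\sA$, so $f\in\sA$ and hence $A(\bfl)\in\sA$. To promote this to $A(\dt,\la)\in\sA$ for general $\dt,\la$, I would perform a secondary induction on $|\la|$ using
\[
0(\dt,\la)A(\bfl)=v^{\ro(A)\cdot(\dt+\la)}A(\dt,\la)+\sum_{\bfl<\bfj\leq\la}v^{\ro(A)\cdot(\dt+\la-\bfj)}\leb{\ro(A)\atop\bfj}\rib A(\dt-\bfj,\la-\bfj)
\]
from the proof of \ref{basis for Vn}, whose leading coefficient $v^{\ro(A)\cdot(\dt+\la)}$ is a unit of $\sZ$. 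Combining everything yields $\Vn=\sA$, which simultaneously proves both claims.
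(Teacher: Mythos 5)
Your proof is correct and follows the same strategy as the paper's: $\sA\subseteq\Vn$ via the multiplication formulas \ref{formula 1} and \ref{formula 2}; $\Vn\subseteq\sA$ by induction using the triangular relation \ref{tri} together with the observation that membership of the error term in $\Vn$, plus uniqueness of expansion in a $\sZ$-basis of $\Vn$, forces the a priori $\mbq(v)$-coefficients into $\sZ$. The one substantive difference is your choice of $\sZ$-basis, which costs you an extra layer of induction. The paper works with $\fB_2=\{A(\bfl)0(\dt,\la)\}$: right-multiplying the relation of \ref{tri} by the generator $0(\dt,\la)$ immediately produces $A(\bfl)0(\dt,\la)$ as the leading term, and the error $g=f\cdot 0(\dt,\la)$ is re-expanded in $\fB_2$ using \ref{basis of zero part} to absorb the $0$-factors, so there is nothing left to promote. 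You use $\fB_3=\{A(\dt,\la)\}$, so you must first extract $A(\bfl)\in\sA$ and then run a secondary induction on $\sg(\la)$, via $0(\dt,\la)A(\bfl)=v^{\ro(A)\centerdot(\dt+\la)}A(\dt,\la)+\cdots$, to reach general $A(\dt,\la)$. Both are sound; the paper's $\fB_2$ is the slicker choice since the right $0$-factor does the promotion for you. One small point to make explicit: you should justify that the primary induction along $\pr$ terminates. The paper inducts on the quantity $|\!|A|\!|$ introduced in its proof, noting that $B\p A$ implies $|\!|B|\!|<|\!|A|\!|$; you should either cite that fact or take $|\!|A|\!|$ as your induction variable directly.
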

\begin{proof}
Let $\Vn_1$ be the $\sZ$-subalgebra of $\prod_{r\geq 0}\bfSr$ generated by
$(mE_{h,h+1})(\bfl)$, $(mE_{h+1,h})(\bfl)$ and
$0(\dt,\la)$ for $m\in\mbn$, $1\leq h\leq n-1$, $\dt\in\mbzn$ and $\la\in\mbnn$.
From \ref{formula 1} and \ref{formula 2} we see that
\begin{equation}\label{zeta(U(n))}
\Vn_1\han\Vn_1\Vn\han\Vn.
\end{equation}
So by \ref{basis for Vn} it is enough to prove $A(\bfl)0(\dt,\la)\in\Vn_1$ for $A\in\Thnpm$, $\dt,\la\in\mbnn$ with $\dt_i\in\{0,1\}$ ($1\leq i\leq n$).
We shall prove this by induction on $|\!|A|\!|$, where
$$|\!|A|\!|=\sum_{r<s}\frac{(s-r)(s-r+1)}{2}a_{rs}+\sum_{r>s}\frac{(r-s)(r-s+1)}{2}a_{rs}\in\mathbb N.$$
If $|\!|A|\!| = 0$, then $A(\bfl)0(\dt,\la)=0(\dt,\la)\in\Vn_1$.
Now we assume that $\ddet A>0$ and our statement is true
for $A'$ with $\ddet {A'}<\ddet A$.
According to \ref{tri}, for $A\in\Thnpm$, we have
$$\prod_{1\leq i\leq h<j\leq n}(a_{i,j}E_{h,h+1})(\bfl)
\cdot\prod_{1\leq j\leq h<i\leq
 n}(a_{i,j}E_{h+1,h})(\bfl)=A(\bfl)+f$$
where $f$ is the $\mbq(v)$-linear combination of $B(\bfl)0(\bfj)$ with $B\in\Thnpm$, $B\p A$ and $\bfj\in\mbzn$. It follows that
\begin{equation}\label{tri1}
\prod_{1\leq i\leq h<j\leq n}(a_{i,j}E_{h,h+1})(\bfl)
\cdot\prod_{1\leq j\leq h<i\leq
 n}(a_{i,j}E_{h+1,h})(\bfl)\cdot 0(\dt,\la)=A(\bfl)0(\dt,\la)+g
\end{equation}
for $\dt,\la\in\mbnn$ with $\dt_i\in\{0,1\}$ ($1\leq i\leq n$), where $g=f\cdot 0(\dt,\la)$. From \eqref{zeta(U(n))}, \ref{basis of zero part} and \ref{basis for Vn} we see that $g$ must be a $\sZ$-linear combination of $B(\bfl)0(\ga,\mu)$ with $B\in\Thnpm$, $B\p A$, $\ga,\mu\in\mbnn$ and $\ga_i\in\{0,1\}$ for $1\leq i\leq n$.
Note that if $B\in\Thnpm$ satisfy $B\p A$, then $|\!|B|\!| < |\!|A|\!|$ (see the proof of \cite[4.2]{BLM}). Thus by induction we conclude that $g\in\Vn_1$ and hence $A(\bfl)0(\dt,\la)\in\Vn_1$. The assertion follows.
\end{proof}

\begin{Thm}\label{realization}
There is a $\sZ$-algebra isomorphism
$\zeta:U(n)\ra\Vn$ satisfying
$E_h^{(m)}\map(mE_{h,h+1})(\bfl)$, $F_h^{(m)}\map(mE_{h+1,h})(\bfl)$ and
$\prod_{1\leq i\leq n}K_i^{\dt_i}\leb{K_i;0\atop \la_i}\rib\map 0(\dt,\la)$
for $m\in\mbn$, $1\leq h\leq n-1$, $\dt\in\mbzn$ and $\la\in\mbnn$.
\end{Thm}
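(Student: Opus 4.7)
The plan is to define $\zeta\colon U(n)\to\Vn$ as the restriction of the combined BLM map $(\zeta_r)_{r\geq 0}\colon\bfUn\to\prod_{r\geq 0}\bfSr$. The main work is then (a) to verify that the Lusztig generators of $U(n)$ map to the elements specified in the theorem, which in particular shows $\zeta(U(n))\subseteq\Vn$; (b) to conclude surjectivity from \ref{Z algebra}; and (c) to deduce injectivity from the classical fact that $\bfUn\hookrightarrow\prod_r\bfSr$ is injective over $\mbq(v)$.

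For (a), for the Cartan part, the defining formula $\zeta_r(K_1^{j_1}\cdots K_n^{j_n})=0(\bfj,r)$ together with \eqref{[diag(la)][A]} and an expansion of $\leb{K_i;0\atop\la_i}\rib$ on the diagonal basis $\{[\diag(\mu)]\mid\mu\in\Lanr\}$ yields $\zeta_r\bigl(\prod_{i=1}^n K_i^{\dt_i}\leb{K_i;0\atop\la_i}\rib\bigr)=0(\dt,\la,r)$. For the divided powers I would prove $\zeta_r(E_h^{(m)})=(mE_{h,h+1})(\bfl,r)$ by induction on $m$: the case $m=1$ is the definition of $\zeta_r$, and the inductive step follows from \ref{BLM formulas}(1) applied to $E_{h,h+1}(\bfl,r)\cdot(mE_{h,h+1})(\bfl,r)$, after simplifying the Gaussian binomials to obtain $[m+1]\,((m+1)E_{h,h+1})(\bfl,r)$. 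The analogous argument using \ref{BLM formulas}(2) handles $F_h^{(m)}$.

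Part (b) is then immediate: \ref{Z algebra} asserts exactly that $(mE_{h,h+1})(\bfl)$, $(mE_{h+1,h})(\bfl)$, and $0(\dt,\la)$ generate $\Vn$ as a $\sZ$-algebra, and each lies in the image of $\zeta$. For part (c), the combined map $\bfUn\to\prod_r\bfSr$ is injective by the classical BLM realization, and $U(n)\hookrightarrow\bfUn$ is built into \ref{basis for Un}, so the restriction $\zeta|_{U(n)}$ is injective as well. The main obstacle is the divided-power verification in (a), whose Gaussian-binomial bookkeeping is delicate; a cleaner self-contained route to simultaneous surjectivity and injectivity would instead use \ref{tri} and \ref{formula 1} to show that $\zeta\bigl(E^{(A^+)}\prod_i K_i^{\dt_i}\leb{K_i;0\atop\la_i}\rib F^{(A^-)}\bigr)=A(\dt,\la)+(\text{lower terms in }\p)$, which, in light of \ref{basis for Vn}(3), furnishes an upper-triangular change of basis between the Lusztig basis of $U(n)$ supplied by \ref{basis for Un} and $\fB_3$, thereby forcing bijectivity directly from the matching indexing sets.
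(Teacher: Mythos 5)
Your proposal is correct, and the interesting point of comparison is the injectivity step. On surjectivity both you and the paper reduce to Proposition \ref{Z algebra}, implicitly using $\zeta(E_h^{(m)}) = (mE_{h,h+1})(\bfl)$, $\zeta(F_h^{(m)}) = (mE_{h+1,h})(\bfl)$, and $\zeta\bigl(\prod_i K_i^{\dt_i}\leb{K_i;0\atop\la_i}\rib\bigr) = 0(\dt,\la)$; your explicit induction via Proposition \ref{BLM formulas}(1) for the divided powers does go through (in the inductive step only $\bft=\bse_{h+1}$ contributes and the coefficient simplifies to $v^m\cdot\ol{[\![m+1]\!]}=[m+1]$). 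Where you diverge: your primary route (c) invokes the classical BLM isomorphism to get injectivity of $\bfUn\to\prod_r\bfSr$ over $\mbq(v)$, whence injectivity of $\zeta|_{U(n)}$ is automatic. That is sound but outsources the essential point to the rational theory in [BLM]. The paper instead re-derives injectivity internally: it pairs the Lusztig $\sZ$-basis of $U(n)$ from Proposition \ref{basis for Un} against the basis $\fB_2$ of $\Vn$ from Lemma \ref{basis for Vn}, and reads off from the triangular identity \eqref{tri1} that $\zeta$ effects a unitriangular base change between the two. This is exactly the ``cleaner self-contained route'' you sketch at the end, so your backup plan coincides with the paper's actual argument. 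One concrete payoff of the paper's self-contained choice, noted in the remark following the theorem, is that the rational BLM realization $\bfUn\cong\bfVn$ then emerges as a corollary of this integral theorem, rather than being a prerequisite for it; your primary route reaches the conclusion faster at the cost of assuming that input.
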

\begin{proof}
The maps $\zeta_r:\bfU(n)\ra\bfSr$ induce an algebra homomorphism
\begin{equation*}
\zeta:\bfUn\ra\prod_{r\geq 0}\bfSr
\end{equation*}
 satisfying $\zeta(x)=(\zeta_r(x))_{r\geq 0}$ for $x\in\bfUn$.
From \ref{Z algebra} we see that $\zeta(U(n))=\Vn$. Furthermore by \ref{basis for Un}, \ref{basis for Vn} and \eqref{tri1}, we conclude that $\zeta$ is injective.
\end{proof}
\begin{Rem}
(1) Let $\bfVn$ be the $\mbq(v)$-subspace of $\prod_{r\geq 0}\bfSr$ spanned by $\{A(\dt)\mid A\in\Thnpm,\,\dt\in\mbzn\}.$ Then $\bfVn\cong\Vn\ot_\sZ\mbq(v)$. According to \ref{Z algebra} and \ref{realization} we conclude that $\bfVn$ is a $\mbq(v)$-subalgebra of $\prod_{r\geq 0}\bfSr$ and $\bfUn\cong\bfVn$.

(2) Note that for $A\in\Thnpm$ and $\la\in\La(n,r-\sg(A))$ we have $A(\bfl,\la,r)=[A+\diag(\la)]$. Thus from \ref{realization} we see that $\zeta_r(\Un)=\Sr$, which has been proved in \cite[3.4]{Du95}.
\end{Rem}

We now use $q$-Schur algebras over $k$ to realize quantum $\frak{gl}_n$ over $k$, where $k$ is a field containing
an $l$-th primitive root $\varepsilon$ of $1$ with $l\geq 1$ odd.
Specializing $v$ to $\varepsilon$, $k$ will be viewed as a $\sZ$-module. For $\mu\in\mbzn$ and $\la\in\mbnn$ we shall denote the image of $\bbl{\mu\atop\la}\bbr$ in $k$ by $\bbl{\mu\atop\la}\bbr_\vep$.
Let $\Unk=\Un\ot_\sZ k$ and $\Srk=\Sr\ot_\sZ k$.
By restriction, the map $\zeta_{r}:\bfUn\ra\bfSr$ induces
an algebra homomorphism $\zeta_r:\Un\ra\Sr$. By tensoring with the field $k$,
we get an algebra homomorphism $$\zeta_{r,k}:=\zeta_r\ot id:\Unk\ra\Srk.$$
Let $$\barUnk=\Unk/\lan K_i^l-1\mid 1\leq i\leq n-1\ran.$$
Since $\zeta_{r,k}(K_i^l)=1$, $\zeta_{r,k}$ induces an algebra homomorphism
$$\bar\zeta_{r,k}:\barUnk\ra\Srk$$
satisfying $\bar\zeta_{r,k}(\bar x)=\zeta_{r,k}(x)$ for $x\in\Unk$.
The maps $\bar\zeta_{r,k}$ induce an algebra homomorphism
$$\bar\zeta_k:=\prod_{r\geq 0}\bar\zeta_{r,k}:\barUnk\ra\prod_{r\geq 0}\Srk$$
satisfying $\bar\zeta_k(x)=(\bar\zeta_{r,k}(\bar x))_{r\geq 0}$ for $\bar x\in\barUnk$.
For $A\in\Thnr$ we let
$[A]_\vep=[A]\ot 1\in\Srk$. Similarly, for $A\in\Thnpm$, $\dt\in\mbzn$ and $\la\in\mbnn$,  let $A(\dt,\la,r)_\vep=A(\dt,\la,r)\ot 1\in\Srk$, $A(\dt,\la)_\vep=(A(\dt,\la,r)_\vep)_{r\geq 0}\in\prod_{r\geq 0}\Srk$
and $A(\dt)_\vep=A(\dt,\bfl)_\vep$. From \ref{basis for Vn} and \ref{realization}
we see that
$$\bar\zeta_k(\barUnk)=\spann_k\{A(\dt,\la)_\vep\mid A\in\Thnpm,\,\dt,\la\in\mbnn,\,\dt_i\in\{0,1\},\forall i\}.$$

\begin{Thm}\label{realization of barUnk}
The algebra homomorphism $\bar\zeta_k$ is injective. Furthermore, the
set $$\frak B_k:=\{A(\bfl)_\vep 0(-\la,\la)_\vep\mid
A\in\Thnpm,\,\la\in\mbnn\}$$ forms a $k$-basis of $\bar\zeta_k(\barUnk)$.
\end{Thm}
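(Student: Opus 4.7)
My plan is to work directly in $\prod_{r\geq 0}\Srk$ and exploit the spanning description established right before the theorem, namely that $\bar\zeta_k(\barUnk)$ is $k$-spanned by $\{A(\bfl)_\vep\,0(\dt,\la)_\vep\mid A\in\Thnpm,\,\dt_i\in\{0,1\},\,\la\in\mbnn\}$. Using \eqref{[diag(la)][A]} one computes, with $c:=\co(A)$,
\begin{equation*}
A(\bfl)_\vep\,0(\dt,\la)_\vep=\sum_{\nu\in\mbnn}\vep^{(\nu+c)\centerdot\dt}\leb{\nu+c\atop\la}\rib_\vep[A+\diag(\nu)]_\vep,
\end{equation*}
and analogously for $0(-\mu,\mu)_\vep$ in place of $0(\dt,\la)_\vep$. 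Because the elements $\{[A+\diag(\nu)]_\vep\mid A\in\Thnpm,\,\nu\in\mbnn\}$ are pairwise distinct basis vectors of $\prod_{r\geq 0}\Srk$, both the linear-independence and the spanning halves of the claim decouple across $A\in\Thnpm$, and each reduces (with $c=\co(A)$ fixed) to a question about $k$-valued functions on $\mbnn$.

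For the linear independence of $\fB_k$, I would prove that for each fixed $c\in\mbnn$ the functions $\psi_\la^c(\nu):=\vep^{-(\nu+c)\centerdot\la}\leb{\nu+c\atop\la}\rib_\vep$ on $\mbnn$ are $k$-linearly independent as $\la$ varies over $\mbnn$. The central tool is triangularity: $\psi_\la^c$ is supported on $\{\nu:\nu+c\geq\la\}$, and on the boundary $\nu=\la-c$ (which is non-negative precisely when $\la\geq c$) it takes the nonzero value $\vep^{-\la\centerdot\la}$. By imposing a total order on $\mbnn$ refining componentwise $\le$ and picking a minimal $\la^*$ in any putative finite dependence, one evaluates at $\nu=\la^*-c$ (or, when $\la^*\not\geq c$, at $\nu_i=\max(0,\la^*_i-c_i)$, where the oddness of $l$ and a $q$-Lucas-type argument preserve non-vanishing of the diagonal contribution) to isolate the coefficient $a_{\la^*}$ and force it to vanish, a contradiction.

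The main obstacle is the spanning claim: for each fixed $A$ with $c=\co(A)$, every $\phi_{\dt,\la}^c(\nu):=\vep^{(\nu+c)\centerdot\dt}\leb{\nu+c\atop\la}\rib_\vep$ (with $\dt_i\in\{0,1\}$, $\la\in\mbnn$) must lie in $\spann_k\{\psi_\mu^c\mid\mu\in\mbnn\}$. My intended route is to promote this to an identity inside the quotient of the commutative subalgebra of $\Unk$ generated by $K_i^{\pm 1}$ and $\leb{K_i;0\atop t}\rib$ modulo $K_i^l=1$: combining $K_i^l=1$ with $q$-Lucas identities valid at $v=\vep$ (where again $l$ odd is essential), each $\prod_iK_i^{\dt_i}\leb{K_i;0\atop\la_i}\rib$ can be rewritten as a finite $k$-linear combination of $\prod_iK_i^{-\mu_i}\leb{K_i;0\atop\mu_i}\rib$; applying $\bar\zeta_k$ then yields the required functional identity. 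Once $\fB_k$ is a $k$-basis of $\bar\zeta_k(\barUnk)$, injectivity of $\bar\zeta_k$ follows by exhibiting explicit preimages: via \ref{tri} together with the multiplication formulas \ref{formula 1} and \ref{formula 2}, each $A(\bfl)_\vep\,0(-\la,\la)_\vep\in\fB_k$ is the image of $E^{(A^+)}\prod_iK_i^{-\la_i}\leb{K_i;0\atop\la_i}\rib F^{(A^-)}\in\barUnk$ modulo strictly $\prec$-lower-order terms, and triangularity combined with the zero-part rewriting shows these preimages form a $k$-basis of $\barUnk$ that $\bar\zeta_k$ maps bijectively onto $\fB_k$.
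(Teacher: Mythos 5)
Your reduction to functional questions on $\mbnn$ for each fixed $A$ (with $c=\co(A)$) matches the paper, and your spanning argument --- rewriting each $\prod_iK_i^{\dt_i}\leb{K_i;0\atop\la_i}\rib$ modulo $K_i^l=1$ as a $k$-combination of $\prod_iK_i^{-\mu_i}\leb{K_i;0\atop\mu_i}\rib$ via $q$-Lucas --- is a self-contained re-derivation of Lusztig's \cite[6.4(b)]{Lu90}, which the paper simply cites. The injectivity deduction once $\fB_k$ is known to be a basis is also the same. So the spanning half and the injectivity conclusion are fine.

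The linear-independence half, however, has a genuine gap in the case $\la^*\not\geq c$. Evaluating at $\nu_i=\max(0,\la^*_i-c_i)$, the ``diagonal'' value of $\psi_{\la^*}^c$ acquires factors $\leb{c_i\atop\la^*_i}\rib_\vep$ whenever $\la^*_i<c_i$, and these can vanish at $\vep$: for instance $l=3$, $c_i=3$, $\la^*_i=1$ gives $\leb{3\atop 1}\rib_\vep=[3]_\vep=0$. Oddness of $l$ and $q$-Lucas do \emph{not} rescue this --- $q$-Lucas tells you exactly when the binomial vanishes, and it can. Worse, if you instead choose $\nu_i$ so that $\nu_i+c_i$ has all base-$l$ digits $l-1$ (to guarantee $\leb{\nu_i+c_i\atop\la^*_i}\rib_\vep\ne0$), then for $\la>\la^*$ the off-diagonal factors $\leb{\nu_j+c_j\atop\la_j}\rib_\vep$ are also generically nonzero, so a single well-chosen $\nu$ no longer isolates $f_{A,\la^*}$. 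The paper sidesteps all of this by first proving, by induction on $\sg(\al)$ using Corollary \ref{mul gauss binomial}(1), the whole family of \emph{shifted} identities
\[
\sum_{\la\geq\al}f_{A,\la}\,\vep^{-\la\centerdot(\mu+\co(A))}\leb{\mu+\co(A)\atop\la-\al}\rib_\vep=0\qquad(\forall\,\al,\mu\in\mbnn),
\]
and only then taking $\al$ to be a $\le$-\emph{maximal} element $\nu$ of the support: the sum collapses to the single term with $\la=\nu$, whose binomial is $\leb{\mu+\co(A)\atop 0}\rib=1$, so there is no possible root-of-unity vanishing. Your minimality-plus-boundary-evaluation scheme should be replaced by this shifted-identity argument (or an equivalent device); as written it does not close.
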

\begin{proof}
We will identify $\Un$ with $\Vn$ via the map $\zeta$ defined in \ref{realization}. From \ref{basis for Vn} and \cite[6.4(b)]{Lu90}, we see that
the set $$\{A(\bfl)0(l\dt)0(-\la,\la)\ot 1\mid A\in\Thnpm,\,\la,\dt\in\mbnn,\,\dt_i\in\{0,1\},\forall i\}$$
forms a $k$-basis for $\Unk$. It follows that $\bar\zeta_k(\barUnk)$ is spanned by the set $\frak B_k$. Thus it is enough to prove that the set $\frak B_k$ is linearly independent.

Assume $$\sum_{A\in\Thnpm,\,\la\in\mbnn}f_{A,\la}A(\bfl)_\vep0(-\la,\la)_\vep=0$$
where $f_{A,\la}\in k$. Then for any $r\geq 0$
\begin{equation*}
\sum_{A\in\Thnpm\atop\mu\in\La(n,r-\sg(A))}\bigg(\sum_{\la\in\mbnn}f_{A,\la}
\vep^{-\la\centerdot
(\mu+\co(A))}\leb{\mu+\co(A)\atop\la}\rib_\vep\bigg)[A+\diag(\mu)]_\vep
=0.
\end{equation*}
It follows that
for any $A\in\Thnpm$, $\mu\in\La(n,r-\sg(A))$ with $r\geq\sg(A)$, we have
\begin{equation}\label{sum}
\sum_{\la\in\mbnn}f_{A,\la}
\vep^{-\la\centerdot
(\mu+\co(A))}\leb{\mu+\co(A)\atop\la}\rib_\vep=0.
\end{equation}
We claim that for $A\in\Thnpm$ and $\mu,\al\in\mbnn$ we have
\begin{equation}\label{sum1}
\sum_{\la\in\mbnn,\,\la\geq\al}f_{A,\la}
\vep^{-\la\centerdot
(\mu+\co(A))}\leb{\mu+\co(A)\atop\la-\al}\rib_\vep=0.
\end{equation}
We apply induction on $\sg(\al)$. For $A\in\Thnpm$ and $\mu,\al\in\mbnn$, we denote
$$g_{A,\al,\mu}=\sum_{\la\in\mbnn,\,\la\geq\al}f_{A,\la}
\vep^{-\la\centerdot
(\mu+\co(A))}\leb{\mu+\co(A)\atop\la-\al}\rib_\vep.$$
 If $\sg(\al)=0$ then the claim follows from
\eqref{sum}. Now we assume $\sg(\al)>0$. There exist $\bt\in\mbnn$ such that
$\al=\bt+\bse_i$.
According to \ref{mul gauss binomial}(1), for $\la\in\mbnn$ with $\la\geq\bt$ we have
$$\leb{\mu+\bse_i+\co(A)\atop\la-\bt}\rib_\vep=\vep^{\la_i-\bt_i}\leb{\mu+\co(A)\atop
\la-\bt}\rib_\vep+
\vep^{\la_i-\bt_i-1-\mu_i-\sum_{1\leq k\leq n}a_{k,i}}\leb{\mu+\co(A)\atop\la-\bt-\bse_i}\rib_\vep.$$
Thus by the induction hypothesis we conclude that
\begin{equation*}
0=g_{A,\bt,\mu+\bse_i}=\vep^{-\bt_i}g_{A,\bt,\mu}+\vep^{-\bt_i-1-\sum_{1\leq k\leq n}a_{k,i}-\mu_i}g_{A,\al,\mu}=\vep^{-\bt_i-1-\sum_{1\leq k\leq n}a_{k,i}-\mu_i}g_{A,\al,\mu}.
\end{equation*}
for $A\in\Thnpm$ and $\mu\in\mbnn$.
It follows that $g_{A,\al,\mu}=0$ for $A\in\Thnpm$ and $\mu\in\mbnn$, proving \eqref{sum1}.

Let $\sX=\{\la\in\mbnn\mid f_{A,\la}\not=0\text{ for some }A\in\Thnpm\}$. If $\sX\not=\emptyset$, we may
choose a maximal element $\nu$ in $\sX$ with respect to $\leq$.
Then by \eqref{sum1} we have
\begin{equation*}
f_{A,\nu}=
\vep^{\nu\centerdot
(\mu+\co(A))}
\sum_{\la\in\mbnn,\,\la\geq\nu}f_{A,\la}
\vep^{-\la\centerdot
(\mu+\co(A))}\leb{\mu+\co(A)\atop\la-\nu}\rib_\vep=0.
\end{equation*}
for $A\in\Thnpm$. This is a
contradiction. Thus $f_{A,\la}=0$ for all $A\in\Thnpm$ and $\la\in\mbnn$.
The assertion follows.
\end{proof}
\begin{Rem}
(1) Let $\sU(\frak{gl}_n)$ be the universal enveloping algebra of $\frak{gl}_n$ and let $\sU_\mbz(\frak{gl}_n)$ be the Kostant $\mbz$-form of $\sU(\frak{gl}_n)$. Let $\Srq=\Sr\ot_\sZ\mbq$, $\Unz=\Un\ot_\sZ\mbz$, where $\mbz$ and $\mbq$ are regarded as $\sZ$-modules by specializing $v$ to $1$.
Let $\Wnz$ be the $\mbz$-submodule of $\prod_{r\geq 0}\Srq$ spanned by the set
$\{A(\bfl,\la)_1\mid A\in\afThnpm,\,\la\in\mbnn\}$.
According to \cite[6.7(c)]{Lu90}, \ref{realization} and \ref{realization of barUnk} we conclude that $\Wnz$ is a $\mbz$ algebra and $\sU_\mbz(\frak{gl}_n)\cong\Unz/\lan K_i-1\mid 1\leq i\leq n\ran\cong\Wnz.$

(2) Assume $\vep=1\in k$. Then $l=1$ and $\Srk$ is the Schur algebra over $k$.
Let $\Wnk$ be the $k$-subspace of $\prod_{r\geq 0}\Srk$ spanned by the set
$\{A(\bfl,\la)_1\mid A\in\afThnpm,\,\la\in\mbnn\}.$
From \cite[6.7(c)]{Lu90} and  \ref{realization of barUnk} we see that $\Wnk$ is a $k$-algebra and $\sU_\mbz(\frak{gl}_n)\ot_\mbz k\cong\barUnk\cong\Wnk.$
\end{Rem}

We end this paper with a conjecture on affine $q$-Schur algebras.
Let
 $\afThn$  be the set of all $\mbz\times\mbz$ matrices
$A=(a_{i,j})_{i,j\in\mbz}$ with $a_{i,j}\in\mbn$ such that
\begin{itemize}
\item[(a)]$a_{i,j}=a_{i+n,j+n}$ for $i,j\in\mbz$, and \item[(b)] for
every $i\in\mbz$, both sets $\{j\in\mbz\mid a_{i,j}\not=0\}$  and
$\{j\in\mbz\mid a_{j,i} \not= 0\}$ are finite.
\end{itemize}
Let
$\afmbzn=\{(\la_i)_{i\in\mbz}\mid
\la_i\in\mbz,\,\la_i=\la_{i-n}\ \text{for}\ i\in\mbz\}$ and $\afmbnn=\{(\la_i)_{i\in\mbz}\in \afmbzn\mid \la_i\ge0\}$.
For $r\in\mbn$ let $\afThnr=\{A\in\afThn\mid\sg(A)=r\}$ and
$\afLanr=\{\la\in\afmbnn\mid\sg(\la)=r\}$
where
$\sg(\la)=\sum_{1\leq i\leq n}\la_i$ and $\sg(A)=\sum_{1\leq i\leq n,\,
j\in\mbz}a_{i,j}$. For $\la\in\afLanr$, let
$\diag(\la)=(\dt_{i,j}\la_i)_{i,j\in\mbz}\in\afThnr$.

Let $\afSr$ be the affine $q$-Schur algebra over $\sZ$. It has a normalized $\sZ$-basis
$\{[A]\mid A\in\afThnr\}$ (see \cite[1.9]{Lu99}). We put $\afbfSr=\afSr\otimes_\sZ\mathbb Q(v).$

Let
$\afThnpm=\{A\in\afThn\mid a_{i,i}
=0\text{ for all $i$}\}.$
For $A\in\afThnpm$, $\dt\in\afmbzn$ and $\la\in\afmbnn$
let
\begin{equation*}
\begin{split}
A(\dt,\la,r)&=\sum_{\mu\in\afLa(n,r-\sg(A))}v^{\mu\centerdot\dt}
\leb{\mu\atop\la}\rib[A+\diag(\mu)]\in\afbfSr\\
A(\dt,\la)&=(A(\dt,\la,r))_{r\geq 0}\in\prod_{r\geq 0}\afbfSr
\end{split}
\end{equation*}
where $\mu\centerdot\dt=\sum_{1\leq i\leq n}\mu_i\dt_i$ and $\leb{\mu\atop\la}\rib=\big[{\mu_1\atop\la_1}\big]\cdots\big[{\mu_n\atop\la_n}\big]$.
Let $A(\dt)=A(\dt,\bfl)$, where
$\bfl=(\cdots,0,\cdots,0,\cdots)\in\afmbnn$.

We shall denote by $\afVn$  the $\sZ$-submodule of $\prod_{r\geq 0}\afbfSr$ spanned by
$\{A(\dt,\la)\mid A\in\afThnpm,\,\dt\in\afmbzn,\,\la\in\afmbnn\}.$

\begin{Lem}\label{basis for afVn}
Each of the following set forms a $\sZ$-basis for $\afVn:$

$(1)$ $\{0(\dt,\la)A(\bfl)\mid A\in\afThnpm,\,
\dt,\la\in\afmbnn,\,\dt_i\in\{0,1\},\forall i\};$

$(2)$ $\{A(\bfl)0(\dt,\la)\mid A\in\afThnpm,\,
\dt,\la\in\afmbnn,\,\dt_i\in\{0,1\},\forall i\};$

$(3)$ $\{A(\dt,\la)\mid A\in\afThnpm,\,
\dt,\la\in\afmbnn,\,\dt_i\in\{0,1\},\forall i\}.$
\end{Lem}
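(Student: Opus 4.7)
The plan is to run the proof of the finite-rank version \ref{basis for Vn} essentially verbatim in the affine setting, after first transferring two basic ingredients: (a) the multiplication formula of \ref{formula 1} and (b) the zero-part basis statement \ref{basis of zero part}. Both extend to the affine context without new ideas.

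First I would establish the affine multiplication formula: for $A \in \afThnpm$, $\dt,\ga \in \afmbzn$, $\la,\mu \in \afmbnn$,
$$0(\ga,\mu)A(\dt,\la) = \sum_{\nu \in \afmbnn,\, \nu \leq \mu} a_\nu\, A(\ga+\dt-\nu, \la+\mu-\nu),$$
with the same coefficients $a_\nu$ as in \ref{formula 1}. The derivation is identical, because \eqref{[diag(la)][A]} holds in $\afbfSr$ using its standard basis $\{[A]\mid A\in\afThnr\}$, and the Gaussian binomial identities of \ref{mul gauss binomial} are purely arithmetic and apply to $n$-periodic tuples without change. Next I would establish the affine zero-part independence: the set $\{0(\dt,\la)\mid \dt,\la\in\afmbnn,\,\dt_i\in\{0,1\},\forall i\}$ is $\sZ$-linearly independent in $\prod_{r\ge 0}\afbfSr$. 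Since periodic $n$-tuples are determined by their first $n$ entries, this reduces to the same Vandermonde-style argument used in \ref{basis of zero part}.

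With these in hand, part (1) is handled exactly as in \ref{basis for Vn}: specializing $\ga=\bfl$ and the second $\la$ to $\bfl$ in the formula above yields
$$0(\dt,\la)A(\bfl) = v^{\ro(A)\centerdot(\dt+\la)}A(\dt,\la) + \sum_{\bfl < \bfj \leq \la} v^{\ro(A)\centerdot(\dt+\la-\bfj)}\leb{\ro(A)\atop\bfj}\rib A(\dt-\bfj,\la-\bfj),$$
a triangular relation whose leading coefficient $v^{\ro(A)\centerdot(\dt+\la)}$ is a unit in $\sZ$. Inverting it expresses each $A(\dt,\la)$ as a $\sZ$-combination of elements of the proposed basis, giving spanning. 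For linear independence, any hypothetical relation splits by $A\in\afThnpm$ (distinct $A$'s contribute the distinct off-diagonal matrices $A+\diag(\mu)$ in the standard basis of $\afbfSr$) and reduces to a relation in the zero part, which vanishes by the affine analogue of \ref{basis of zero part}. Part (2) is handled by the symmetric right-multiplication formula.

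For part (3), the identity
$$A(\dt,\la) = v^{\la_i}(v^{\la_i+1}-v^{-\la_i-1})A(\dt-\bse_i,\la+\bse_i) + v^{2\la_i}A(\dt-2\bse_i,\la)$$
(valid for each periodic coordinate direction $\bse_i$) iterates to bring arbitrary $\dt\in\afmbzn$ into the range $\dt_i\in\{0,1\}$, giving spanning. For linear independence, a relation $\sum f_{A,\dt,\la}A(\dt,\la)=0$ evaluated in the $r$-th component yields
$$\sum_{\dt,\la}f_{A,\dt,\la}\,v^{\mu\centerdot\dt}\leb{\mu\atop\la}\rib=0$$
for each $A\in\afThnpm$ and each $\mu\in\afLa(n,r-\sg(A))$, which is exactly the statement that $\sum f_{A,\dt,\la}\,0(\dt,\la,r)=0$ for every $r$ and every fixed $A$; by the affine zero-part independence this forces all $f_{A,\dt,\la}=0$. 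The main obstacle will be cleanly verifying the two affine ingredients, in particular the zero-part independence over the periodic index set $\afmbzn$; once these are recorded, the three pieces of the lemma are structural transplants of their finite-rank analogues with no new subtleties.
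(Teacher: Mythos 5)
Your proposal is correct and takes essentially the same approach as the paper: the paper's own proof of this lemma is the one-line remark that it ``can be proved in a way similar to the proof of \ref{basis for Vn}'', and you have simply spelled out what that similarity requires, namely transporting the diagonal-multiplication formula of \ref{formula 1} (which in the affine case only needs \eqref{[diag(la)][A]} for the normalized basis of $\afbfSr$ and the purely arithmetic identities of \ref{mul gauss binomial}) and the zero-part independence of \ref{basis of zero part} to the $n$-periodic setting, then repeating the triangularity and Vandermonde-style arguments verbatim.
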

\begin{proof}
The assertion can be proved in a way similar to the proof of \ref{basis for Vn}.
\end{proof}

According to \ref{Z algebra}, $\Vn$ is a $\sZ$-subalgebra of $\prod_{r\geq 0}\bfSr$. Thus, it is natural to formulate the following conjecture.
\begin{Conj}\label{conj}
$\afVn$ is a $\sZ$-subalgebra of $\prod_{r\geq 0}\afbfSr$.
\end{Conj}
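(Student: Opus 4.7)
The plan is to adapt the proof of Proposition~\ref{Z algebra} to the affine setting, which rests on three ingredients: the product formulas of Lemmas~\ref{formula 1} and~\ref{formula 2}, the triangular relation of Proposition~\ref{tri}, and an induction on a matrix norm $\|A\|$.

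First, I would establish affine analogues of Lemmas~\ref{formula 1} and~\ref{formula 2}. The proof of Lemma~\ref{formula 1} uses only the idempotent identities~\eqref{[diag(la)][A]}, which hold verbatim in $\afbfSr$, so its statement and proof transfer to $\afVn$ with all tuples replaced by periodic ones in $\afmbzn,\afmbnn$. The proof of Lemma~\ref{formula 2} uses the BLM multiplication formulas of Proposition~\ref{BLM formulas} together with the binomial identities of Corollary~\ref{mul gauss binomial}; affine versions of the BLM formulas are available from \cite{Lu99} and \cite[\S3]{DDF}, and the binomial identities extend coordinate-wise to periodic vectors. With these in hand, the affine versions of Lemmas~\ref{formula 1} and~\ref{formula 2} follow by the same computation.

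Next, let $\afVn_1$ denote the $\sZ$-subalgebra of $\prod_{r\geq 0}\afbfSr$ generated by the divided powers $(mE_{h,h+1}^\vtg)(\bfl)$ and $(mE_{h+1,h}^\vtg)(\bfl)$ for $h\in\mbz$, $m\in\mbn$ (where $E_{h,h+1}^\vtg\in\afThn$ is the matrix with entries $1$ at all positions $(h+kn,h+1+kn)$ and $0$ elsewhere, and analogously for $E_{h+1,h}^\vtg$), together with all $0(\dt,\la)$ for $\dt\in\afmbzn$, $\la\in\afmbnn$. By periodicity these divided powers depend only on $h$ modulo $n$. The affine analogues of Lemmas~\ref{formula 1} and~\ref{formula 2} imply $\afVn_1\cdot\afVn\han\afVn$, hence in particular $\afVn_1\han\afVn$. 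By Lemma~\ref{basis for afVn}(2) it then suffices to prove that $A(\bfl)0(\dt,\la)\in\afVn_1$ for every $A\in\afThnpm$ and every admissible $\dt,\la$, which one would attempt by induction on a suitable $\mbn$-valued norm $\|A\|$.

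The crux, and the main obstacle, is an affine analogue of the triangular relation in Proposition~\ref{tri}: one must define a partial order $\p$ on $\afThnpm$ (via affine versions of the statistics $\sg_{i,j}(A)$) and an $\mbn$-valued norm $\|A\|$ that is strictly monotone with respect to $\p$, and then prove that a suitable product of divided powers $(a_{i,j}E_{h,h+1}^\vtg)(\bfl)$ and $(a_{i,j}E_{h+1,h}^\vtg)(\bfl)$, taken in an order analogous to~\eqref{order}, equals $A(\bfl)$ modulo a $\mbq(v)$-linear combination of $B(\bfj)$ with $B\in\afThnpm$, $B\p A$ and $\bfj\in\afmbzn$. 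Granted such a formula, the argument of Proposition~\ref{Z algebra} runs verbatim. The obstruction is genuine: BLM derive the finite triangular formula geometrically on pairs of flags, and in the affine case the index set $\afThnpm$ is infinite and individual rows may contain nonzero entries arbitrarily far off the diagonal, so one must ensure both that the appropriate products stabilise to well-defined elements of $\prod_{r\geq 0}\afbfSr$ and that all error terms actually lie in $\afVn$. A careful treatment paralleling \cite[\S3.8]{DDF} should yield the formula and would simultaneously identify $\afVn$ with $\ti{\fD}_\vtg(n)$, as anticipated in the introduction.
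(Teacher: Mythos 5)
The statement you are asked to prove is stated in the paper as an open \emph{conjecture}; the paper offers no proof, only the remark that the classical ($v=1$) specialization is known and that a positive answer would identify $\afVn$ with the integral double Ringel--Hall algebra $\ti{\fD}_\vtg(n)$ of \cite{DDF}. So there is no proof in the paper to compare against, and your proposal should be judged on its own terms.

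Your plan correctly mirrors the finite-type argument of Proposition~\ref{Z algebra}: transport the multiplication formulas of Lemmas~\ref{formula 1}--\ref{formula 2} to the affine setting, introduce a generated subalgebra $\afVn_1$, and run an induction on a norm via a triangular relation to show $\afVn\han\afVn_1$. You are also honest that the affine triangular relation is the missing piece. However, the obstruction is deeper than the technical issues of stabilisation and infinite index sets that you flag, and it is structural: the subalgebra $\afVn_1$ you propose --- generated by $(mE_{h,h+1}^\vtg)(\bfl)$, $(mE_{h+1,h}^\vtg)(\bfl)$ and $0(\dt,\la)$ --- is a \emph{proper} $\sZ$-subalgebra of $\afVn$, so the inclusion $A(\bfl)0(\dt,\la)\in\afVn_1$ you need for the induction is false in general. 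In the finite case, every $A(\bfl)$ with $A\in\Thnpm$ is reachable from the Chevalley divided powers because $\Thnpm$ is built from consecutive-entry elementary matrices; in the affine case, matrices $A\in\afThnpm$ with entries far from the diagonal (such as $E_{i,i+n}^\vtg$) correspond to elements outside the composition subalgebra of the cyclic-quiver Hall algebra, and the affine $q$-Schur algebra $\afbfSr$ is well known \emph{not} to be generated by the images of the Chevalley generators of quantum affine $\frak{sl}_n$ together with the diagonal part. This is precisely why the paper and \cite{DDF} work with the double Ringel--Hall algebra rather than a Chevalley-type presentation, and why Conjecture~\ref{conj} is genuinely open rather than a routine transcription of Proposition~\ref{Z algebra}.

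In short: your proposal is a reasonable roadmap and correctly locates the triangular relation as a missing ingredient, but even granting that ingredient, the reduction $\afVn\han\afVn_1$ cannot hold as stated, so the argument does not close. A correct approach must either enlarge the generating set of $\afVn_1$ to include elements supported on the homogeneous tubes (with accompanying multiplication formulas, which are considerably harder and do not follow from Proposition~\ref{BLM formulas}), or establish closure of $\afVn$ under products directly, e.g.\ via the explicit affine multiplication formulas in \cite[\S3]{DDF} and a more elaborate triangular analysis such as the one sketched there in \S3.7--3.8.
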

\begin{Rems}
(1) According to \cite{Fu}, Conjecture \ref{conj} is true in the classical ($v = 1$) case.

(2)
Let $\afbfVn$ be the $\mbq(v)$-subspace of $\prod_{r\geq 0}\afbfSr$ spanned by all
$A(\dt)$ for $A\in\afThnpm$ and $\dt\in\mbzn$.
It is conjectured in \cite[5.5(2)]{DF09} that $\afbfVn$ is a $\mbq(v)$-subalgebra of $\prod_{r\geq 0}\afbfSr$.
From \ref{basis of zero part} and \ref{basis for afVn}, we see that $\afbfVn\cong\afVn\ot\mbq(v)$. Thus if Conjecture \ref{conj} is true, then we conclude that
$\afbfVn$ is a $\mbq(v)$-subalgebra of $\prod_{r\geq 0}\afbfSr$.

(3) If Conjecture \ref{conj} is true, then by \cite[3.7.3]{DDF} we conclude that the conjecture formulated in \cite[3.8.6]{DDF} is true and $\afVn$ is isomorphic to $\ti{\fD}_\vtg(n)$, where $\ti{\fD}_\vtg(n)$ is a certain $\sZ$-module defined in \cite[(3.8.1.1)]{DDF}.
\end{Rems}

\end{document}